\documentclass[reqno]{amsart}
\usepackage{amssymb,amsthm,amsfonts,amstext,amsmath}
\usepackage{mathtools}
\usepackage{fontawesome5}
\usepackage{eso-pic}
\usepackage{charter}
\usepackage{listings}
\usepackage{color}
\usepackage{textcomp}
\usepackage[normalem]{ulem}

\definecolor{dkgreen}{rgb}{0,0.6,0}
\definecolor{gray}{rgb}{0.5,0.5,0.5}
\definecolor{mauve}{rgb}{0.58,0,0.82}
\lstdefinelanguage{none}{
	identifierstyle=
}

\usepackage{comment}
\usepackage{graphicx}
\usepackage{enumerate}
\numberwithin{equation}{section}
\usepackage{color}
\usepackage{float}
\usepackage[colorlinks=true,linkcolor=blue,citecolor=magenta]{hyperref}

\makeatletter
\def\namedlabel#1#2{\begingroup
	#2%
	\def\@currentlabel{#2}%
	\phantomsection\label{#1}\endgroup
}
\makeatother

\usepackage{mathrsfs}
\usepackage{pythonhighlight}
\usepackage{tikz}
\usepackage{dsfont}
\usepackage{enumitem}

\newlength{\starsize}
\newlength{\starspread}
\tikzset{starsize/.code={\setlength{\starsize}{#1}},
	starspread/.code={\setlength{\starspread}{#1}}}
\tikzset{starsize=1mm,
	starspread=3mm}
\pgfdeclarepatternformonly[\starspread,\starsize]
{custom fivepointed stars}
{\pgfpointorigin}
{\pgfqpoint{\starspread}{\starspread}}
{\pgfqpoint{\starspread}{\starspread}}
{
	\pgftransformshift{\pgfqpoint{\starsize}{\starsize}}
	\pgfpathmoveto{\pgfqpointpolar{18}{\starsize}}
	\pgfpathlineto{\pgfqpointpolar{162}{\starsize}}
	\pgfpathlineto{\pgfqpointpolar{306}{\starsize}}
	\pgfpathlineto{\pgfqpointpolar{90}{\starsize}}
	\pgfpathlineto{\pgfqpointpolar{234}{\starsize}}
	\pgfpathclose%
	\pgfusepath{fill}
}

\usepackage[utf8]{inputenc}
\usepackage[T1]{fontenc}
\usetikzlibrary{backgrounds}
\usetikzlibrary{patterns,fadings}
\usetikzlibrary{arrows,decorations.pathmorphing}
\usetikzlibrary{calc}
\definecolor{light-gray}{gray}{0.95}

\usetikzlibrary{patterns}

\newlength{\hatchspread}
\newlength{\hatchthickness}
\newlength{\hatchshift}
\newcommand{\hatchcolor}{}
\tikzset{hatchspread/.code={\setlength{\hatchspread}{#1}},
	hatchthickness/.code={\setlength{\hatchthickness}{#1}},
	hatchshift/.code={\setlength{\hatchshift}{#1}},
	hatchcolor/.code={\renewcommand{\hatchcolor}{#1}}}
\tikzset{hatchspread=3pt,
	hatchthickness=0.4pt,
	hatchshift=0pt,
	hatchcolor=black}
\pgfdeclarepatternformonly[\hatchspread,\hatchthickness,\hatchshift,\hatchcolor]
{custom north west lines}
{\pgfqpoint{\dimexpr-2\hatchthickness}{\dimexpr-2\hatchthickness}}
{\pgfqpoint{\dimexpr\hatchspread+2\hatchthickness}{\dimexpr\hatchspread+2\hatchthickness}}
{\pgfqpoint{\dimexpr\hatchspread}{\dimexpr\hatchspread}}
{
	\pgfsetlinewidth{\hatchthickness}
	\pgfpathmoveto{\pgfqpoint{0pt}{\dimexpr\hatchspread+\hatchshift}}
	\pgfpathlineto{\pgfqpoint{\dimexpr\hatchspread+0.15pt+\hatchshift}{-0.15pt}}
	\ifdim \hatchshift > 0pt
	\pgfpathmoveto{\pgfqpoint{0pt}{\hatchshift}}
	\pgfpathlineto{\pgfqpoint{\dimexpr0.15pt+\hatchshift}{-0.15pt}}
	\fi
	\pgfsetstrokecolor{\hatchcolor}
	\pgfusepath{stroke}
}

\pgfdeclarepatternformonly[\hatchspread,\hatchthickness,\hatchshift,\hatchcolor]
{custom north east lines}
{\pgfqpoint{\dimexpr-2\hatchthickness}{\dimexpr-2\hatchthickness}}
{\pgfqpoint{\dimexpr\hatchspread+2\hatchthickness}{\dimexpr\hatchspread+2\hatchthickness}}
{\pgfqpoint{\dimexpr\hatchspread}{\dimexpr\hatchspread}}
{
	\pgfsetlinewidth{\hatchthickness}
	\pgfpathmoveto{\pgfqpoint{\dimexpr\hatchshift-0.15pt}{-0.15pt}}
	\pgfpathlineto{\pgfqpoint{\dimexpr\hatchspread+0.15pt}{\dimexpr\hatchspread-\hatchshift+0.15pt}}
	\ifdim \hatchshift > 0pt
	\pgfpathmoveto{\pgfqpoint{-0.15pt}{\dimexpr\hatchspread-\hatchshift-0.15pt}}
	\pgfpathlineto{\pgfqpoint{\dimexpr\hatchshift+0.15pt}{\dimexpr\hatchspread+0.15pt}}
	\fi
	\pgfsetstrokecolor{\hatchcolor}
	\pgfusepath{stroke}
}

\usepackage{float}
\usepackage[colorlinks=true,linkcolor=blue,citecolor=magenta]{hyperref}
\newtheorem{theorem}{Theorem}[section]
\newtheorem{lemma}[theorem]{Lemma}

\newtheorem{proposition}[theorem]{Proposition}

\newtheorem{remark}[theorem]{Remark}

\newcommand{\msf}[1]{{\mathsf #1}}

\newcommand{\bb}[1]{{\mathbb #1}}

\newcommand{\eps}{\varepsilon}

\newcommand{\N}{\mathbb N}
\newcommand{\R}{\mathbb R}

\newcommand{\E}{\mathbb E}

\newcommand{\PP}{\mathbb P}

\def\centerarc[#1](#2)(#3:#4:#5){\draw[#1] ($(#2)+({#5*cos(#3)},{#5*sin(#3)})$) arc (#3:#4:#5);}

\def\II{\mathrm{I\kern-0.1emI}}

\let\oldtocsection=\tocsection
\let\oldtocsubsection=\tocsubsection
\let\oldtocsubsubsection=\tocsubsubsection
\renewcommand{\tocsection}[2]{\hspace{0em}\oldtocsection{#1}{#2}}
\renewcommand{\tocsubsection}[2]{\hspace{1em}\oldtocsubsection{#1}{#2}}
\renewcommand{\tocsubsubsection}[2]{\hspace{2em}\oldtocsubsubsection{#1}{#2}}
\DeclareRobustCommand{\SkipTocEntry}[5]{}

\keywords{Delayed logistic equation, Hutchinson equation, fluid limit,  Markov chains}

\begin{document}

\title[Delayed Logistic equation as a limit of long memory Markov chains]{Delayed Logistic equation as a limit\\ of  long memory Markov chains}

\author[E. Barros]{Eldon Barros}
\address{IMPA\\
	Estrada Dona Castorina, 110
	Jardim Botânico
	CEP 22460-320
	Rio de Janeiro, Brazil}
\curraddr{}
\email{eldon.barros@impa.br}
\thanks{}

\author[D. Erhard]{Dirk Erhard}
\address{UFBA\\
 Instituto de Matem\'atica, Campus de Ondina, Av. Milton Santos, S/N. CEP 40170-110\\
Salvador, Brazil}
\curraddr{}
\email{dirk.erhard@ufba.br}
\thanks{}

\author[T. Franco]{Tertuliano Franco}
\address{UFBA\\
 Instituto de Matem\'atica, Campus de Ondina, Av. Milton Santos, S/N. CEP 40170-110\\
Salvador, Brazil}
\curraddr{}
\email{tertu@ufba.br}
\thanks{}

\author[M. Jara]{Milton Jara}
\address{IMPA\\
	Estrada Dona Castorina, 110
	Jardim Botânico
	CEP 22460-320
	Rio de Janeiro, Brazil}
\curraddr{}
\email{mjara@impa.br}
\thanks{}

\subjclass[2010]{60F17, 34K07}

\begin{abstract}	
	We introduce and analyze a long memory continuous-time Markov chain on $\bb R_+$ whose jump mechanism depends explicitly on a state in the past. From the present state $x_0$, the process jumps to $x_0(1+\frac{1}{N})$ or to $x_0(1-\frac{x_{-\lfloor \tau N\rfloor}}{N^2})$, each at rate $1/2$, where $x_{-\lfloor \tau N\rfloor}$ denotes the state located $\lfloor \tau N\rfloor$  jumps backward in time. Here the delay $\tau>0$ is fixed and $N$ is the scaling parameter. The initial condition is prescribed by a vector of length $\lfloor \tau N\rfloor+1$, all of whose entries are equal to $uN$.
		Using a genuine space–time replacement lemma, we prove that, as $N\to\infty$, the rescaled process converges to a deterministic limit governed by the \textit{Delayed Logistic Equation} (also known as the \textit{Hutchinson equation}) with delay $\tau$ and initial condition $\rho(t)\equiv \mu$ for $t\in[-\tau,0]$. 
\end{abstract}

%

\maketitle

\tableofcontents

\allowdisplaybreaks

\section{Introduction}\label{s1}

Delayed differential equations are functional differential equations in which the evolution of the system depends not only on its present state, but also on its past. For general introductions we refer to \cite{Hale} and \cite{Ruan}. Delay equations play a central role in modeling phenomena where reaction times, maturation periods, or information propagation are non-negligible. Prominent applications arise in infectious disease dynamics, population biology, neuronal networks, economics, and finance; see \cite{Hal, Rihan2021} for examples.

A particularly important example is the \textit{Delayed Logistic Equation}, also known as \textit{Hutchinson Equation},  which generalizes the classical \textit{Logistic Equation}. The latter was introduced by Verhulst in 1838  (see \cite{verhulst1838notice}) and is given by
\[
\begin{cases}
	u'(t)\; =\; r u(t)\Big(1 - \frac{u(t)}{K}\Big)\,, & \text{ for } t>0\,,\\
	 u(0) = \rho\,, &
\end{cases}
\]
as a model for population growth. The growth rate is proportional to the population size and modulated by the factor  $1 - u(t)/K$, which reflects the competition for finite food resources.
In 1948, Hutchinson \cite{HUTCHINSON}
proposed the following delayed version of the logistic equation
\begin{equation}\label{eq:Delayed}
\begin{cases}
u'(t)\; =\; r u(t)\Big(1 - \frac{u(t-\tau)}{K}\Big)\,, & \text{ for } t>0\,,\\
u(t) = \rho(t)\,, & \text{ for } t \in [-\tau, 0]\,,
\end{cases}
\end{equation}
where the delay $\tau>0$ models that the effect of reduced resources is not instantaneous but is felt only after a time lag $\tau$. In contrast to the classical logistic equation, the initial condition is now a function on $[-\tau,0]$. The delayed logistic equation exhibits a rich phenomenology, including oscillatory behavior absent in the non-delayed case, and a Hopf bifurcation.

Despite its classical status and extensive analytical study, a fundamental probabilistic question remains largely open:
{\em Can delayed differential equations arise as the limit of microscopic stochastic dynamics?}
In this work we answer this question affirmatively for the Delayed Logistic Equation \eqref{eq:Delayed}.


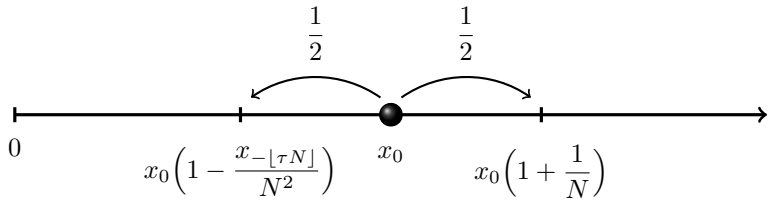
\begin{figure}[!htb]
	\centering
	\begin{tikzpicture}[scale=1]
		\draw[very thick, ->] (0,0)--(10,0);
		\draw[very thick] (0,0.1) -- (0,-0.1); 
		\draw (0,-0.2) node[below]{$0$};

		\begin{scope}[xshift = 1cm]
			\draw (3,0.6) node[above]{$\displaystyle\frac{1}{2}$};
			\draw (5,0.6) node[above]{$\displaystyle\frac{1}{2}$};		
			\centerarc[thick,->](3,-1)(55:125:1.5);
			\centerarc[thick,<-](5,-1)(55:125:1.5);

			\draw (4,-0.3) node[below]{$x_0$};
			\draw[very thick] (6,0.1) -- (6, -0.1);
			\draw (6,-0.75)node[]{$x_0\Big(1+\dfrac{1}{N}\Big)$};
			\draw[very thick] (2,0.1) -- (2,-0.1);
			\draw (2,-0.75)node[]{$x_0\Big(1-\dfrac{x_{-\lfloor \tau N\rfloor}}{N^2}\Big)$};
			\filldraw[ball color=black] (4,0) circle (.15);
		\end{scope}

	\end{tikzpicture}
	
	\bigskip
	\caption{Jump rates for the \textit{long memory Markov chain} considered here. Note that the origin is an absorbent state. Above, the real value $x_0$ denotes the present state while $x_{-\lfloor \tau N\rfloor}$ denotes the state $\lfloor \tau N\rfloor$ jumps backward in time.}
	\label{fig:rates}
\end{figure}

In this work we introduce and analyze a continuous-time stochastic process with long memory on the positive half-line whose jump mechanism is illustrated in Figure~\ref{fig:rates}. Although the process is not Markovian in its natural state variable, it admits a Markovian representation in an enlarged state-space. The transition mechanism depends explicitly on a state $\lfloor \tau N\rfloor$ jumps in the past, thereby encoding a memory window of macroscopic length $\tau$ under the appropriate scaling.

Our main result shows that, under proper rescaling, this long memory stochastic dynamics converges in probability to the solution of the Delayed Logistic Equation~\eqref{eq:Delayed}. Thus, macroscopic delay emerges purely from microscopic memory effects. 
Convergence of trajectories of Markov chains on $\bb R$ towards solutions of ordinary differential equations -- commonly referred to as a \textit{fluid limit} -- is by now well understood; see \cite{EK}. In contrast, the derivation of a delay differential equation as a scaling limit of a microscopic stochastic dynamics appears to be largely unexplored. In particular, to the best of our knowledge, this is the first instance in which a delayed differential equation arises as the  limit of a stochastic process with memory. We emphasize that the scaling considered here is related to the fluid limit, but does not coincide with it. In fluid limit  time and space are rescaled, while in our setting  time, space \textit{and memory} (or state-space, as we shall see) are rescaled. 

The emergence of delay at the macroscopic level is intimately connected with the presence of long memory at the microscopic scale. While classical Markov chains evolve without reference to their past, the present dynamics here depend explicitly on a state located $\lfloor \tau N\rfloor$ jumps backward in time. Under our scaling, this discrete memory window becomes a fixed delay $\tau$ in the limiting equation. 

From a technical perspective, the proof requires some extensions of standard fluid limit techniques. The first step consists in embedding the long memory process into a Markov chain on a high-dimensional state-space. We then analyze Dynkin’s martingale associated with a suitable observable. As in the classical approach, the strategy is to prove that this martingale vanishes uniformly in probability, leading to an integral formulation of the limiting equation.

Two major difficulties arise. The first concerns the quadratic variation of the martingale. To overcome this, we develop moment estimates tailored to the enlarged state-space. The second difficulty is to obtain the correct limiting equation, which requires a replacement argument inspired by Varadhan's entropy method, see \cite{kl}.
In contrast to the standard replacement lemma which deals with time integrals of space averages, we must establish a genuine \textit{space-time replacement} lemma. More precisely, we must show that, in a suitable sense,  $x_0(t-\tau)\approx x_{-\lfloor \tau N\rfloor}(t)$. That is, we must assure that \textit{the past in time  is approximately equal to the past in space}.  The precise formulation is given in Section~\ref{s3}.

Our hope is that the present paper serves as a starting point to study the emergence of delay from a microscopic dynamic. It would be further interesting to investigate if similar results for fluctuations could be obtained.
 
The paper is divided as follows. In Section~\ref{s2} we define the model and state the main results. Section~\ref{s3} contains the proofs. Finally, in Section~\ref{s4} we present computational simulations of the model, illustrating the convergence towards the delayed logistic dynamics.

\section{Model and Statement of Results}\label{s2}
Denote by $\bb R_{\geq 0} = [0,\infty)$ the set of nonnegative reals numbers and assume that $\bb N = \{1,2,\ldots\}$.
Given two real valued functions $f,g$ defined on the same space $X$, we will write hereinafter $f(u) \lesssim g(u)$ if there exists a constant $C$ independent of $u$ such that $f(u) \le C g(u)$ for every $u\in X$.

 Fix parameters $\tau, \mu \in \R_{\geq 0}$ and $N \in \N$. We next construct a continuous-time Markov chain $X^N$ that tracks both the current population and its recent history.

For ease of notation,  we will often denote  $\lfloor \tau N\rfloor$ simply by $\tau N$.
Consider the  state-space $\Omega_N = \bb R_{\geq 0}^{ \tau N +1}$. Elements of this state-space will be denoted by $x = (x_{-\tau N}, \ldots, x_0)$, where $x_0$ represents the current population size and $x_{j}$ the population size $-j$ time steps in the past.   Define the shift operators $\theta_N^+$ and $\theta_N^-$ as the operators  on $\Omega_N$ such that, for  $x \in \Omega_N$, 
\begin{equation}\label{eq:rates+}
	\big(\theta_N^+(x)\big)_{j} \;:=\;
	\begin{cases}
		x_{j+1}, & \text{if }  -\tau N \leq j<0, \vspace{3pt}\\
		x_0\Big(1+\frac{1}{N}\Big), & \text{if } j = 0, 
	\end{cases}
\end{equation}
and
\begin{equation}\label{eq:rates-}
	\big(\theta_N^-(x)\big)_{j} \;:=\;
	\begin{cases}
		x_{j+1}, & \text{if }  -\tau N \leq j<0, \vspace{3pt}\\
		x_0\Big(1-\frac{x_{-\tau N }}{N^2}\Big) \vee 0, & \text{if } j = 0. 
	\end{cases}
\end{equation}

The vectors $\theta_N^{+}(x)$ and $\theta_N^{-}(x)$ represent the state $x$ after a birth or death event, respectively, together with the shift of the memory window.
Let $\xi^{N} = (\xi^{N}(n))_{n \in \N}$ be the discrete-time Markov chain on $\Omega_{N}$ with transition probabilities 
\begin{equation}\label{eq:discrete_Markov}
	P_{N}(x,y) \;=\;\frac{1}{2}\delta_{\theta_N^{-}(x)}(y) +\frac{1}{2} \delta_{\theta_N^{+}(x)}(y) \,,
\end{equation}
and initial condition given by $\xi_{j}^{N}(0) =  \mu N$ for all $j \in \{ - \tau N, \ldots ,0\}$.

Let $\sigma = \left(\sigma_{n}\right)_{n\geq1}$ be an i.i.d sequence of exponential random variables with parameter $1$ and define the jumps times $J_{n} = \sum_{j=1}^{n} \sigma_{j}$. The continuous-time process $X^{N}= (X^{N}(t))_{t\geq 0} = (X^{N}_{-\tau N}(t), \ldots, X^{N}_{0}(t))_{t\geq 0}$ taking values in $\Omega_N$ is defined by 
\begin{equation}\label{chain}
	X^{N}(t) \;=\; \sum_{n \in \bb N}\xi^{N}(n) \mathds{1}_{\left\{t \in \left[J_{n},J_{n+1}\right)\right\}}, \quad \text{for } t>0, 
\end{equation}
with $X^{N}(0) = \xi^{N}(0)$. The infinitesimal  generator $\msf{L}_N$ of $X^{N}$ acts on bounded measurable functions $f: \Omega_N \rightarrow \R$ as
\begin{equation*}
	\msf{L}_Nf(x) \;=\; \frac{1}{2}\left(f(\theta_N^{+}(x))- f(x)\right)+ \frac{1}{2}\left(f(\theta_N^{-}(x))- f(x)\right).	
\end{equation*}

\begin{theorem}\label{thm:main}
	Fix $\tau > 0$ and $\mu \in (0,1)$. For each $N \in \N$, let $X^N$ be the continuous-time Markov chain defined in \eqref{chain}. Define the scaled process
	\begin{equation}\label{eq:Y}
		Y^N(t) \;=\;
		\begin{cases}
			\dfrac{X_{- \tau N}^N(N(t+\tau))}{N}\,, & \text{if } t \in [-\tau, 0), \vspace{5pt}\\
			\dfrac{X_0^N(Nt)}{N}\,, & \text{if } t \geq 0.
		\end{cases}
	\end{equation}
	Then, for every $T > 0$, the sequence of processes $\{Y^N(t):t\in[-\tau,T]\}$ converge in probability in the Skorohod topology of $D([-\tau,T], \bb R)$ to the unique continuous solution $u:[-\tau,T]\to \bb R$ to the Delayed Logistic  differential equation
	\begin{equation}\label{eq:dde}
		\begin{cases}
			\displaystyle u'(t) \;=\;\frac{1}{2} u(t)\Big(1 - u(t-\tau)\Big)\,, & \text{ for } t > 0,\vspace{4pt}\\
			u(t) \;=\; \mu\,, &  \text{ for } t \in [-\tau, 0].
		\end{cases}
	\end{equation}
\end{theorem}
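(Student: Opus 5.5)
We will follow the classical fluid-limit strategy (Dynkin martingale, tightness, identification of the limit, uniqueness), together with a space--time replacement step.

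\emph{Step 1: the martingale decomposition.} We take the observable $f(x)=x_0/N$. Then
\[
\msf L_N f(x)=\frac{1}{2}\frac{x_0}{N^2}-\frac{1}{2}\frac{x_0}{N}\Big(\frac{x_{-\tau N}}{N^2}\wedge 1\Big).
\]
After speeding up time by $N$, Dynkin's formula gives
\[
Y^N(t)=\mu+\frac12\int_0^t Y^N(s)\big(1-\tfrac{X^N_{-\tau N}(Ns)}{N}\big)\,ds+M^N(t)+R^N(t),
\]
where $R^N$ collects the truncation error coming from $\vee 0$. The quadratic variation of $M^N$ is of order
\[
N\int_0^t\Big(\tfrac{X_0}{N^2}\Big)^2\Big(1+\tfrac{X_{-\tau N}^2}{N^2}\Big)ds\lesssim \frac1N\int_0^t (Y^N)^2\big(1+(Y^N_{\rm past})^2\big)\,ds .
\]
The troublesome terms in both $M^N$ and $R^N$ involve moments of $X_0/N$ and $X_{-\tau N}/N$.

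\emph{Step 2: moment bounds.} Since a death only decreases $x_0$, the process $X^N_0(Nt)/N$ is dominated by a pure multiplicative birth chain. That chain has $\E[(X_0/N)^p]\le \mu^p e^{Cpt}$ for every $p$. Every coordinate of the enlarged state is a past value of $X_0$, so the same bound holds uniformly for all coordinates. Doob's inequality then gives $\sup_{t\le T}|M^N|\to0$ in probability. The same domination shows the truncation is active only on an event of vanishing probability, since it requires $X_{-\tau N}\ge N^2$.

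Tightness in $D([-\tau,T],\R)$ follows from Aldous' criterion. The drift integrand is bounded in $L^2$ and the jumps have size $O(Y/N)$, so limit points are continuous. On $[-\tau,0)$ we need $Y^N$ close to $\mu$. This holds because, before time $\tau N$ jumps have occurred, $X_{-\tau N}$ is still an initial entry equal to $\mu N$. After that, it equals $X_0$ at the time of the $(j-\tau N)$-th jump, so the concern is only the random timing.

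\emph{Step 3: space--time replacement (main obstacle).} We must show
\[
\sup_{t\le T}\Big|\int_0^t Y^N(s)\Big(\tfrac{X^N_{-\tau N}(Ns)}{N}-Y^N(s-\tau)\Big)ds\Big|\to0
\]
in probability. Here $Y^N(s-\tau)$ is defined from $X_0$ at time $N(s-\tau)$ when $s\ge\tau$, and equals $\mu$ otherwise. The key observation is that $X_{-\tau N}(Ns)=X_0(J_{n-\tau N})$, where $n$ is the number of jumps by time $Ns$. By the law of large numbers for the Poisson clock, $\sup_{s\le T}|J_{n(Ns)-\tau N}/N-(s-\tau)|\to0$ in probability, with fluctuations of order $N^{-1/2}$. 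The problem thus reduces to the modulus of continuity of $Y^N$ over time windows of size $\delta$. This modulus is controlled by the decomposition of Step 1: the drift is Lipschitz in $t$ with a random but $L^2$-bounded constant, and the martingale is uniformly small. Cauchy--Schwarz together with the moment bounds of Step 2 then makes the error vanish. The point needing care is that the random time change and the path modulus are evaluated simultaneously. I would handle this by intersecting with the high-probability event on which the clock deviation is at most $\delta$, and then letting $N\to\infty$ and afterwards $\delta\to0$.

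\emph{Step 4: identification and uniqueness.} Any limit point $u$ is continuous, equals $\mu$ on $[-\tau,0]$, and satisfies
\[
u(t)=\mu+\tfrac12\int_0^t u(s)(1-u(s-\tau))\,ds .
\]
The solution of this equation is unique by the method of steps: on each interval $[k\tau,(k+1)\tau]$ the equation is a linear ODE with known coefficient $u(s-\tau)$. The limit is therefore deterministic, and convergence in distribution to a constant upgrades to convergence in probability. Continuity of $u$ also makes Skorohod convergence equivalent to uniform convergence on $[-\tau,T]$.
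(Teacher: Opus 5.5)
Your proposal is correct. Its skeleton matches the paper's: Dynkin martingale for $x_0/N$, moments via domination by a pure-birth chain, Doob, tightness, replacement, identification. The key space--time replacement, however, is proved differently.

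The paper works in the jump-index variable. By Lemma~\ref{lem:timesdifferences}, each jump moves $\xi^N_0$ by at most $H^N(T)$, so $|D^N(s)|\le N^{-1}H^N(T)\,|F(Ns)-F(N(s-\tau))-\lfloor\tau N\rfloor|$ plus a term on $\{F(Ns)\le\tau N\}$. It then bounds the moments of this jump-count discrepancy through central moments of Poisson and Skellam variables (splitting $s\le 2\tau$ and $s>2\tau$) and a Chernoff bound, which gives an explicit rate of order $N^{-1/2}$.

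You work in the time variable instead. On $\{F(Ns)\ge\lfloor\tau N\rfloor\}$ one has $Z^N(s)=Y^N\big(J_{F(Ns)-\lfloor\tau N\rfloor}/N\big)$. The uniform law of large numbers for the clock makes this random time $\delta$-close to $s-\tau$. The semimartingale decomposition gives the pathwise bound $w_{Y^N}(\delta)\le C_N\delta+2\sup_t|M^N(t)|+2\sup_t|R^N(t)|$, with $C_N=\frac12\sup_s Y^N(s)\big(1+\sup_s Z^N(s)\big)$ tight. This is softer, since it gives no rate, but it avoids the Poisson/Skellam computations. The same modulus bound also yields C-tightness on $[0,T]$ and closeness to $\mu$ on $[-\tau,0)$ at once, replacing the paper's three-condition criterion built from Billingsley's theorems.

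Two further simplifications relative to the paper:
\begin{itemize}
\item Your treatment of the truncation is simpler than going through Lemma~\ref{lem:momentsbound3}: $\widetilde Z^N\equiv 0$ on $[0,T]$ outside $\{\sup_{s\le T}Z^N(s)>N\}$, an event of vanishing probability.
\item Your method-of-steps uniqueness argument makes self-contained what the paper takes from the literature.
\end{itemize}

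Two minor remarks:
\begin{itemize}
\item With the statement's definition of $Y^N$ on $[-\tau,0)$, one has $Y^N(s-\tau)=Z^N(s)$ exactly for $s<\tau$, so there is no replacement error there at all. This is why the paper integrates $D^N$ only over $[\tau,t]$. Your convention $Y^N(s-\tau)=\mu$ costs you the negative-time estimate, which you have anyway.
\item Death jumps of $Y^N$ have size $Y^N\min(Z^N/N,1)$, so the jump bound is $O(Y^N(1+Z^N)/N)$ rather than $O(Y^N/N)$. This is harmless given your bounds on $\sup_s Z^N(s)$.
\end{itemize}
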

\begin{remark}
\rm Before turning to the proofs we introduce an additional notion.  Recall that the state-space of the Markov chain $X^N$ is $\Omega_N = \bb R_{\geq 0}^{ \tau N +1}$ and each of the two possible transitions involves a shift of a configuration  $x = (x_{-\tau N}, \ldots, x_0)$ within that space. We adopt the following terminology. By \textit{past in time} we mean, as usual, an earlier time along the trajectory of the Markov chain. By \textit{past in space}, we refer to one of the coordinates of the vector $X^N(t) = \big(X^N_{-\tau N}(t), \ldots, X^N_0(t)\big)$, where $X^N_0(t)$ represents the present. Note that, due to the fact that any transition comes with a shift in space, any \textit{past in space} has been the present at some \textit{past in time}.
\end{remark}
\section{Proofs}\label{s3} 

\subsection{Proof of the \texorpdfstring{Theorem~\ref{thm:main}}{Theorem 2.1}}
In this section we prove the Theorem~\ref{thm:main} subjected to several results to be proven in the forthcoming sections. Consider the function $f_N:\Omega_N\to\R$ given by $f_{N}(x) = x_{0}/N$. By the  Dynkin's martingale formula, we know that the process
\begin{equation}\label{dynkinmartingale}
	M_t^N \;:=\; f_N(X^N(t)) - f_N(X^N(0)) - \int_0^t \msf{L}_N f_N(X^N(s))\,ds
\end{equation}
is a càdlàg martingale.
Define the process
\begin{equation}\label{eq:defZ}
	Z^N(t) \;:=\; \frac{X^N_{-\tau N}(Nt)}{N}\,,
\end{equation}
which represents the \textit{past in space}. Using \eqref{dynkinmartingale}  and \eqref{eq:defZ}, we obtain the following semimartingale decomposition for the rescaled process $Y^N$ defined in \eqref{eq:Y}:
\begin{equation}\label{eq:semimartingaledecomposition}
	\begin{aligned}
		Y^N(t)
		&\;=\; Y^N(0)
		+ \frac{1}{2} \int_0^t Y^N(s)\bigl[1 - Y^N(s-\tau)\bigr]\,ds \\
		&\quad + \frac{1}{2} \int_\tau^t Y^N(s) D^N(s)\,ds
		+ \frac{1}{2} \int_0^t Y^N(s) \widetilde{Z}^N(s)\,ds
		+ M_{Nt}^N\,,
	\end{aligned}
\end{equation}
where
\begin{equation}\label{eq:diff_Y_Z}
	D^N(s) := Y^N(s-\tau) - Z^N(s)\,,
\end{equation}
and
\begin{equation}\label{Ztilde}
	\widetilde{Z}^N(s) := Z^N(s) - \bigl(Z^N(s) \wedge N\bigr)\,.
\end{equation}
By Proposition~\ref{lem:tight}, the sequence of processes $\{Y^N(t) : t \in [-\tau,T]\}_{N\geq 1}$ is tight in the space $D([-\tau,T], \bb R)$. Hence, along a subsequence still denoted by $N$, the sequence of processes $Y^N$ converges in distribution in the Skorokhod topology to a process which we denote by $\{Y(t) : t \in [-\tau,T]\}$. 

We now identify the limit. By Proposition~\ref{lem:initialcondition}, we have that $Y(t) = \mu$ for $t \in [-\tau,0]$. Now, Proposition~\ref{lem:jumps} together with \cite[Theorem 13.4]{Bill} implies that $Y$ is almost surely continuous and we obtain that $Y^N(s)\bigl[1 - Y^N(s-\tau)\bigl]$ converges uniformly to $Y(s)\bigl[1 - Y(s-\tau)\bigr]$. Then, the continuous mapping theorem yields
\begin{equation*}
	\lim_{N\to\infty} \frac{1}{2}\int_0^t Y^N(s)\bigl[1 - Y^N(s-\tau)\bigr]\,ds
	\;=\;
	\frac{1}{2}\int_0^t Y(s)\bigl[1 - Y(s-\tau)\bigr]\,ds
\end{equation*}
in distribution in the space $D([0,T], \bb R)$.
Furthermore, by Proposition~\ref{lem:replacement},
\begin{equation*}
	\lim_{N\to\infty} \int_\tau^t Y^N(s) D^N(s)\,ds \;\xrightarrow[N\to\infty]{\mathbb{P}}\; 0\,,
\end{equation*}
and by Proposition~\ref{lem:Ztildelem},
\begin{equation*}
	\int_0^t Y^N(s)\widetilde{Z}^N(s)\,ds \;\xrightarrow[N\to\infty]{\mathbb{P}}\; 0\,.
\end{equation*}
Finally, Proposition~\ref{lem:martingale} ensures that the martingale term $M_{Nt}^N$ vanishes in probability. Passing to the limit in \eqref{eq:semimartingaledecomposition}, we conclude that, for all $t \in [0,T]$,
\[
Y(t) \;=\; Y(0) + \frac{1}{2}\int_0^t Y(s)\bigl[1 - Y(s-\tau)\bigr]\,ds\,.
\]
That is, $Y$ is a solution of the Hutchinson equation with initial condition $Y(t) = \mu$ for $t\in [-\tau,0]$, which is well-posed (see the discussion in Section 5.5 of~\cite{Hal}). Hence, the sequence of processes $\{Y^N(t):\, t\in [-\tau, T]\}_{N\geq 1}$  converges in distribution in the Skorokhod topology $D([-\tau,T], \bb R)$ to the unique solution of~\eqref{eq:dde}. Finally, since the limiting object is deterministic, convergence in distribution implies convergence in probability,  completing the proof of the Theorem~\ref{thm:main}.


\subsection{Uniform moment bounds}
We start with some moment estimates on the processes $Y^N(t)$, $Z^N(t)$ and $\widetilde{Z}^N(t)$. These bounds will be useful in the sequel.

Let $F(t)$ be the counting process which gives the number of jumps of the\break continuous-time Markov chain $\{X^N(t): t\in [0,T]\}$ up to time $t\geq 0$. Since  the waiting times between two jumps are i.i.d.\ exponential random variables with parameter $1$, the process $F$ is a $\mathtt{PPP}(1)$. Thus, for fixed time $t>0$, the random variable $F(t)$ has Poisson distribution with parameter $t$.
\begin{lemma}\label{lem:momentsbound1}
	For each $p \geq 1$ and $N \in \N$, there exists a constant $K_{N} \in (1, 1+1/N)$ such that 
	\begin{equation*}\label{eq:moment-bound2}
		\E\left[\bigg(\sup_{t \in [0,T]} Y^{N}(t)\bigg)^p\right] \;\leq\; \mu^p \exp\left\{pK_{N}^{p-1}T\right\}.
	\end{equation*}
In particular,
\begin{equation*}
	\sup_N \sup_{t \in [0,T]}\mathfrak{m}_p^N(t) \;<\; \infty\,,
\end{equation*}
where $\mathfrak{m}_p^N(t):=\E[(Y^N(t))^p]$.
\end{lemma}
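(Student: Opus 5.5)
The idea is to dominate $Y^N$ pathwise by a pure-birth process and then compute the moments of that process exactly through the Poisson jump count $F$.

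Since every death jump multiplies $x_0$ by a factor in $[0,1]$ (the truncation $\vee 0$ guarantees this), and every birth jump multiplies it by $1+1/N$, we have pathwise
\[
X_0^N(s)\;\le\;\mu N\Big(1+\tfrac1N\Big)^{B(s)}\;\le\;\mu N\Big(1+\tfrac1N\Big)^{F(s)},
\]
where $B(s)\le F(s)$ denotes the number of birth jumps up to time $s$. The right-hand side is nondecreasing in $s$, so
\[
\sup_{t\in[0,T]}Y^N(t)\;\le\;\mu\Big(1+\tfrac1N\Big)^{F(NT)}.
\]
This already removes the supremum, and no martingale or maximal inequality is needed.

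Next, we compute the moment using the fact that $F(NT)$ is Poisson with parameter $NT$. This gives
\[
\E\Big[\Big(1+\tfrac1N\Big)^{pF(NT)}\Big]=\exp\Big\{NT\Big(\big(1+\tfrac1N\big)^p-1\Big)\Big\}.
\]
A sharper route is to use only the birth counts, which form a Poisson process of rate $1/2$; this gives the exponent $\frac{NT}{2}\big((1+1/N)^p-1\big)$. That bound is stronger than the stated one, and either version suffices.

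By the mean value theorem, $(1+1/N)^p-1=\frac pN K_N^{p-1}$ for some $K_N\in(1,1+1/N)$. Substituting, the exponent becomes $pK_N^{p-1}T$ (or half of that in the sharper version, which is bounded by it). This is exactly the claimed bound. For $p=1$ the factor $K_N^{p-1}$ equals $1$, so the choice of $K_N$ is irrelevant there.

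For the "in particular" statement, we note $\mathfrak m_p^N(t)\le\E[(\sup_{[0,T]}Y^N)^p]$ for $t\in[0,T]$, and that $K_N^{p-1}\le 2^{p-1}$ uniformly in $N$. This gives a bound independent of both $N$ and $t$.

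There is no serious obstacle; the only point needing care is the monotone domination. It relies on the death factor $1-x_{-\tau N}/N^2$ being at most $1$, which holds because the coordinates are nonnegative, together with the $\vee0$ truncation that keeps it from becoming negative.
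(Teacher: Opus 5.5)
Your proof is correct and is essentially the paper's argument: pathwise domination by the pure-birth bound $\mu(1+1/N)^{F(NT)}$, then the Poisson exponential moment, then the mean value theorem. Your extra remarks are valid details the paper leaves implicit: the role of the $\vee 0$ truncation, the sharper rate-$1/2$ thinning bound, and the uniform bound $K_N^{p-1}\le 2^{p-1}$ for the ``in particular'' part.
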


\begin{proof}
Bounding by the extreme  case where we have only  jumps to the right,  it holds almost surely that
	\begin{equation*}
		\bigg(\sup_{t \in [0,T]} Y^{N}(t)\bigg)^p  \;\leq\; \sup_{t \in [0,T]} \mu^{p}\left(1 + 1/N\right)^{pF(Nt)} \;=\; \mu^{p}\left(1 + 1/N\right)^{pF(NT)}.
	\end{equation*}
	Now using exponential moments of the Poisson distribution and the Mean Value Theorem, we obtain
	\begin{equation*}
		\begin{split}
			\E\left[\bigg(\sup_{t \in [0,T]} Y^{N}(t)\bigg)^p\right] & \;\leq\; \mu^{p} \E\left[\left(1 + 1/N\right)^{pF(NT)} \right] =  \mu^{p} \E\left[e^{F(NT)p\log(1+1/N)} \right] \\ & \;=\; \mu^{p}\exp\big\{NT[(1+1/N)^p-1]\big\} = \mu^{p}\exp\big\{pK_{N}^{p-1}T\big\}
		\end{split}
	\end{equation*}
	for some $K_{N} \in \left(1, 1 + 1/N\right)$, finishing the proof.
\end{proof}
Recall the definition \eqref{eq:defZ} of $Z^N(t)$ .
\begin{lemma}\label{lem:momentsbound2}
	For each $p \geq 1$ and $N \in \N$, there exists a constant $K_{N} \in (1, 1+1/N)$ such that 
	\begin{equation*}
		\E\left[\bigg(\sup_{t \in [0,T]} Z^{N}(t)\bigg)^p\right] \;\leq\; (2\mu)^p \exp\left\{pK_{N}^{p-1}T\right\}.
	\end{equation*}
	In particular,
\begin{equation*}
	\sup_N \sup_{t \in [0,T]}\widetilde{\mathfrak{m}}_p^N(t) \;<\; \infty\,,
\end{equation*}
where $\widetilde{\mathfrak{m}}_p^N(t):=\E[(Z^N(t))^p]$.
\end{lemma}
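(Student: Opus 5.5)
The plan is to reduce to the same pathwise domination used in the proof of Lemma~\ref{lem:momentsbound1}, applied to the oldest coordinate of the memory window. The key structural fact is the shift: at every jump, each coordinate moves one slot to the left, and a new present value is appended at position $0$. Hence at any time $t$ the coordinate $X^N_{-\tau N}(t)$ equals one of two things. Either it is an entry of the initial vector, which equals $\mu N$; or it is the value $X^N_0(s)$ of the present coordinate at some earlier time $s\le t$, namely the time of the jump that placed it in slot $0$.

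First, I will record that almost surely, for all $t\in[0,T]$,
\begin{equation*}
	Z^N(t) \;=\; \frac{X^N_{-\tau N}(Nt)}{N} \;\le\; \max\Big\{\mu,\ \sup_{s\in[0,T]} Y^N(s)\Big\} \;\le\; \mu + \sup_{s\in[0,T]} Y^N(s).
\end{equation*}
This holds because in the second case $X^N_{-\tau N}(Nt)/N = X^N_0(Ns)/N = Y^N(s)$ for some $s\le t\le T$. Second, I will bound $\sup_{s\le T} Y^N(s)\le \mu(1+1/N)^{F(NT)}$, exactly as in the proof of Lemma~\ref{lem:momentsbound1}. Since $(1+1/N)^{F(NT)}\ge 1$, this gives the pathwise bound
\begin{equation*}
	\sup_{t\in[0,T]} Z^N(t)\;\le\; \mu+\mu(1+1/N)^{F(NT)} \;\le\; 2\mu\,(1+1/N)^{F(NT)}.
\end{equation*}
This is what produces the factor $2\mu$ in the statement.

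Third, I will raise the bound to the power $p$ and take expectations. Using the exponential moment of the Poisson variable $F(NT)$ together with the mean value theorem, as before, gives
\begin{equation*}
	\E\Big[\big(\sup_{t\le T} Z^N(t)\big)^p\Big]\;\le\; (2\mu)^p\exp\{NT[(1+1/N)^p-1]\} \;=\; (2\mu)^p\exp\{pK_N^{p-1}T\}.
\end{equation*}
The "in particular" claim then follows because $K_N\le 2$ uniformly in $N$, and $\E[(Z^N(t))^p]$ is at most the expectation of the supremum.

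The only step needing care is the first, the shift bookkeeping. One must check that the truncation $\vee 0$ and the death moves never increase the value, so that the domination by pure births also holds for values that were present at earlier times. This is immediate: every update of the present coordinate multiplies it by a factor at most $1+1/N$.
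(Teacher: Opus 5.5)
Your proposal is correct and follows essentially the same route as the paper. The paper also uses the shift to write $X^N_{-\tau N}(Nt)$ as either the initial value $\mu N$ or the earlier present value $\xi_0^N(F(Nt)-\tau N)$. It then dominates by pure births to obtain $\sup_{t\le T} Z^N(t)\le 2\mu(1+1/N)^{F(NT)}$, and concludes with the exponential moment of the Poisson variable $F(NT)$ and the mean value theorem.
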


\begin{proof}
	Recall that by~\eqref{chain} we have that $X^N(t)= \xi^N(n)$ if and only if the process performed exactly $n$ jumps until time $t$. Recall also that the transitions of the process involve a shift in space and a jump to the left or a jump to the right  at the zero coordinate  of the process. Utilizing the fact that in the extreme case we have only jumps to the right, it holds almost surely that 
	\begin{equation*}
		\begin{split}
			&\bigg(\sup_{t \in [0,T]} Z^{N}(t)\bigg)^p\\
			 &=\; \bigg( \sup_{t \in [0,T]} N^{-1}\left[\xi_{0}^{N}(F(Nt)-\tau N) \mathds{1}_{\{ F(Nt)> \tau N \}} + N \mu \mathds{1}_{\{ F(Nt) \leq \tau N \}} \right] \bigg)^p \\ 
			 &\leq\; \mu^{p} \left[ \left(1+ 1/N\right)^{F(NT)-\tau N} + 1\right]^p \leq (2\mu)^{p} (1+ 1/N)^{pF(NT)}\,.
		\end{split}
	\end{equation*}
	Then, using again exponential moments of the Poisson distribution and the Mean Value Theorem, we obtain that, for some $K_{N} \in \left(1, 1 + 1/N\right)$,
	\begin{equation*}
			\E\left[\bigg(\sup_{t \in [0,T]} Z^{N}(t)\bigg)^p\right] \;\leq\; (2\mu)^p \exp\left\{pK_{N}^{p-1}T\right\}.
	\end{equation*}
Hence, we can conclude the proof.
\end{proof}
Recall the definition \eqref{Ztilde} of $\widetilde{Z}^{N}(s)$.  
\begin{lemma}\label{lem:momentsbound3}
	For each $p\geq 1$ and $q \geq 1$, there exists a constant $C_{p,q,T}>0$ depending only on $p,q$ and $T$ such that 
	\begin{equation*}
		\E\left[\left( \widetilde{Z}^{N}(s)\right)^{p} \right] \;\leq\; C_{p,q,T} N^{-q/2}
	\end{equation*}
	for all $s \in [0,T]$.
\end{lemma}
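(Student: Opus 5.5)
Since $\widetilde{Z}^N(s) = Z^N(s) - (Z^N(s)\wedge N) = (Z^N(s)-N)_+$, the variable vanishes unless $Z^N(s) > N$. The plan is to control it by splitting off this rare event with Hölder's inequality, then bound its probability with Markov's inequality and the uniform moment bounds of Lemma~\ref{lem:momentsbound2}.

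First, note the pointwise bound
\[
\bigl(\widetilde{Z}^N(s)\bigr)^p \;\le\; \bigl(Z^N(s)\bigr)^p\,\mathds{1}_{\{Z^N(s)>N\}} .
\]
Applying Cauchy--Schwarz gives
\[
\E\Bigl[\bigl(\widetilde{Z}^N(s)\bigr)^p\Bigr] \;\le\; \E\Bigl[\bigl(Z^N(s)\bigr)^{2p}\Bigr]^{1/2}\, \PP\bigl(Z^N(s)>N\bigr)^{1/2}.
\]
By Lemma~\ref{lem:momentsbound2} (with $2p$ in place of $p$, and $K_N\le 2$), the first factor is bounded by $(2\mu)^p\exp\{2^{2p-1}pT\}$, uniformly in $N$ and $s\in[0,T]$.

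For the second factor, apply Markov's inequality with exponent $q$:
\[
\PP\bigl(Z^N(s)>N\bigr) \;\le\; N^{-q}\,\E\bigl[(Z^N(s))^q\bigr] \;\le\; N^{-q}(2\mu)^q \exp\{q2^{q-1}T\},
\]
again by Lemma~\ref{lem:momentsbound2}. Taking the square root yields the factor $N^{-q/2}$. Combining the two bounds gives the claim with
\[
C_{p,q,T} = (2\mu)^{p+q/2}\exp\{(p2^{2p-1} + q2^{q-2})T\},
\]
which is finite since $\mu<1$. The constant also depends on $\mu$, but that is harmless since $\mu\in(0,1)$ is fixed.

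There is no real obstacle here. The only point to check is that the bounds of Lemma~\ref{lem:momentsbound2} hold uniformly in $N$, which follows from $K_N<1+1/N\le 2$. The argument in fact gives arbitrary polynomial decay, because $q$ is free.
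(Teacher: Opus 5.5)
Your proof is correct and follows the paper's argument step for step: the pointwise bound $(\widetilde{Z}^N(s))^p\le (Z^N(s))^p\mathds{1}_{\{Z^N(s)>N\}}$, then Cauchy--Schwarz, then Markov's inequality with exponent $q$ on the event $\{X^N_{-\tau N}(Ns)>N^2\}$, with Lemma~\ref{lem:momentsbound2} bounding both moment factors uniformly in $N$. Your explicit constant is right; it is finite for any fixed $\mu$, not only because $\mu<1$, and, as you note, it also depends on $\mu$.
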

\begin{proof}
	Using the Cauchy-Schwarz inequality, we obtain the bound
		\begin{equation*}
			\begin{split}
				\E\left[\left( \widetilde{Z}^{N}(s)\right)^{p} \right]  &= \E\left[ \bigg(Z^{N}(s) - N\bigg)^{p}\mathds{1}_{\left\{X_{-\tau N}^{N}(Ns) > N^{2}\right\}}\right] \\ 
				&\leq \E\left[ Z^{N}(s)^{p}\mathds{1}_{\left\{X_{-\tau N}^{N}(Ns) > N^{2}\right\}}\right] \\
				&\leq \E\big[Z^N(s)^{2p}\big]^{1/2}\Big(\PP\left(X_{-\tau N}^{N}(Ns) > N^{2} \right) \Big)^{1/2}\,.
			\end{split}
		\end{equation*}
		Now, to estimate the probability above we use Markov's inequality, which yields the bound
		\begin{equation*}
			\PP\left(X_{-\tau N}^{N}(Ns) > N^{2} \right) \;\leq\; \frac{\E[X_{-\tau N}^{N}(Ns)^q]}{N^{2q}}\;=\; \frac{1}{N^q} \E[Z^{N}(s)^q]\,.
		\end{equation*}
		Recalling Lemma~\ref{lem:momentsbound2}, we can deduce that there exists a constant $C_{p,q}$ such that 
		\begin{eqnarray*}
			\E\left[\left( \widetilde{Z}^{N}(s)\right)^{p} \right]\;\leq\; C_{p,q}N^{-q/2}\,, \qquad\forall\, N \in \bb N,
		\end{eqnarray*}
		concluding the proof.
\end{proof}


In possess of the previous moment bounds, we can study the convergence of the Dynkin martingale \eqref{dynkinmartingale}.
\begin{proposition} \label{lem:martingale}
Consider the  martingale $M^N_t$ given by~\eqref{dynkinmartingale}. Then,
\begin{equation*}
	\lim_{N\to\infty} \sup_{t\in [0,T]}|M_{Nt}^{N}| \;=\; 0
\end{equation*}
in probability.
\end{proposition}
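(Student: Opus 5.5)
We will use Doob's $L^2$ maximal inequality together with the standard formula for the predictable quadratic variation of a Dynkin martingale. For a continuous-time Markov chain with generator $\msf{L}_N$, the process $(M^N_t)^2 - \int_0^t \Gamma_N(X^N(s))\,ds$ is a martingale. Here
\begin{equation*}
\Gamma_N(x) \;=\; \msf{L}_N f_N^2(x) - 2 f_N(x)\msf{L}_N f_N(x) \;=\; \frac12\big(f_N(\theta_N^+x)-f_N(x)\big)^2 + \frac12\big(f_N(\theta_N^-x)-f_N(x)\big)^2 .
\end{equation*}
Doob's inequality then gives
\begin{equation*}
\E\Big[\sup_{t\in[0,T]}|M^N_{Nt}|^2\Big] \;\le\; 4\,\E\big[(M^N_{NT})^2\big] \;=\; 4\,\E\Big[\int_0^{NT}\Gamma_N(X^N(s))\,ds\Big].
\end{equation*}
Chebyshev's inequality reduces the proposition to showing that the right-hand side tends to $0$. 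One technical point is that $f_N$ is unbounded. We handle it either by localizing with stopping times $\inf\{t: X^N_0(t) > RN\}$, or by noting that the moment bounds of Lemma~\ref{lem:momentsbound1} make $f_N$ and $f_N^2$ integrable along the trajectory, so the martingale identities still hold.

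Next we compute the two increments. For a birth, $f_N(\theta_N^+x)-f_N(x) = x_0/N^2$. For a death, $|f_N(\theta_N^-x)-f_N(x)| \le \frac{x_0}{N}\min\{1, x_{-\tau N}/N^2\} \le x_0 x_{-\tau N}/N^3$; the truncation $\vee 0$ only decreases the jump size. Hence
\begin{equation*}
\Gamma_N(x) \;\le\; \frac{x_0^2}{2N^4} + \frac{x_0^2x_{-\tau N}^2}{2N^6}.
\end{equation*}
After the time change $s = Nr$, and recalling $X^N_0(Nr)=NY^N(r)$ and $X^N_{-\tau N}(Nr)=NZ^N(r)$, this becomes
\begin{equation*}
\E\Big[\int_0^{NT}\Gamma_N(X^N(s))\,ds\Big] \;\le\; \frac{1}{2N}\int_0^T \Big(\E[Y^N(r)^2] + \E\big[Y^N(r)^2 Z^N(r)^2\big]\Big)\,dr .
\end{equation*}

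To conclude, we bound the mixed term by Cauchy--Schwarz: $\E[(Y^N)^2(Z^N)^2]\le \E[(Y^N)^4]^{1/2}\E[(Z^N)^4]^{1/2}$. Lemmas~\ref{lem:momentsbound1} and~\ref{lem:momentsbound2} give bounds on these moments that are uniform in $N$ and in $r\in[0,T]$. The whole expression is therefore $O(1/N)$, which yields the claim.

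The main obstacle is justifying the martingale and quadratic-variation identities for the unbounded observable. We would localize at $\tau_R=\inf\{t : X^N_0(t)\ge RN\}$, apply the bounded-case argument up to time $\tau_R$, and let $R\to\infty$ using monotone convergence. The bound $\sup_t Y^N(t)\le \mu(1+1/N)^{F(NT)}$ guarantees that $\tau_R> NT$ with probability tending to one as $R\to\infty$.
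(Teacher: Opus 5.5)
Your proof is correct and follows the paper's route. You compute the predictable quadratic variation $\frac{1}{2N}\int_0^t\bigl[(Y^N)^2+(Y^NZ^N)^2\bigr]\,ds$, bound its expectation by $O(1/N)$ via Cauchy--Schwarz and Lemmas~\ref{lem:momentsbound1} and~\ref{lem:momentsbound2}, and conclude with Doob's inequality. You are slightly more careful than the paper: you account for the truncation $\vee 0$ in the death jump, which gives an inequality where the paper states an equality, and you justify the martingale identities for the unbounded $f_N$ by localization.
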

\begin{proof}
The predictable quadratic variation is given by
\begin{equation*}
	\begin{split}
		\langle M^{N} \rangle_{Nt} & \;=\; \frac{1}{2N} \int_{0}^{t}\left[\left(Y^{N}(s)\right)^2 + \left(Y^{N}(s)Z^{N}(s)\right)^2\right]ds\,.
	\end{split}
\end{equation*}
Taking expectation on both sides, applying Fubini's Theorem and using Cauchy-Schwarz inequality, we obtain
\begin{equation*}
	\begin{split}
		\mathbb{E}\left[\langle M^{N} \rangle_{Nt} \right] &\;\leq\; \frac{1}{2N} \int_{0}^{t}\left[\mathfrak{m}_{2}^{N}(s) + (\mathfrak{m}_{4}^{N}(s)\widetilde{\mathfrak{m}}_{4}^{N}(s))^{1/2}\right]ds\,.
	\end{split}
\end{equation*}
Thus, by Lemmas  \ref{lem:momentsbound1} and \ref{lem:momentsbound2} we may conclude that $\mathbb{E}\left[\langle M^{N} \rangle_{Nt} \right] \rightarrow 0$ as $N \rightarrow +\infty$. Therefore, using the Doob's Inequality we conclude that $$\lim_{N\to\infty} \sup_{t\in [0,T]}|M_{Nt}^{N}| \;=\; 0$$ in probability.
\end{proof}
\subsection{Replacement lemma}
We start with a bound on the increments of the rightmost entry $\xi^N_0(t)$ of the discrete-time Markov chain $\xi^N(t)$ defined at \eqref{eq:discrete_Markov}.
\begin{lemma}\label{lem:timesdifferences}
	For each $m,q \in \N_{0}$ and $T>0$, it almost surely holds that
	\begin{equation*}
		|\xi_{0}^{N}(m+q)-\xi_{0}^{N}(m)| \;\leq\; q H(T)\,,
	\end{equation*}
		where
	\begin{equation}\label{eq:HN}
		H^{N}(T) \;:=\; \sup_{{s} \in [0,T]}\left\{ Y^{N}({s})+ Y^{N}({s})Z^{N}({s})\right\}.
	\end{equation}
\end{lemma}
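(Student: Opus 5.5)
The plan is to telescope the increment into single-step increments of the discrete chain and to bound each step by $H^N(T)$, defined in \eqref{eq:HN}. One caveat applies throughout. $H^N(T)$ only controls the process up to macroscopic time $T$, that is, up to $F(NT)$ jumps of the continuous-time chain. The statement must therefore be read for indices with $m+q\le F(NT)$, i.e.\ every state $\xi^N(k)$ with $m\le k<m+q$ is visited by $X^N$ at some time in $[0,NT]$. I will make this restriction explicit; it is exactly the regime in which the lemma is used later.

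First I write
\[
|\xi_0^N(m+q)-\xi_0^N(m)|\;\le\;\sum_{k=m}^{m+q-1}|\xi_0^N(k+1)-\xi_0^N(k)|,
\]
so it suffices to show $|\xi_0^N(k+1)-\xi_0^N(k)|\le H^N(T)$ for each such $k$.

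Next I read off the single-step increment from \eqref{eq:rates+}--\eqref{eq:rates-}. If the step is a $\theta_N^+$ move, the increment is $\xi_0^N(k)/N$. If it is a $\theta_N^-$ move, the new value is $\xi_0^N(k)\bigl(1-\xi^N_{-\tau N}(k)/N^2\bigr)\vee 0$. The absolute increment is then
\[
\min\bigl\{\xi_0^N(k),\ \xi_0^N(k)\,\xi^N_{-\tau N}(k)/N^2\bigr\}\;\le\;\xi_0^N(k)\,\xi^N_{-\tau N}(k)/N^2 ,
\]
where the truncation at $0$ can only decrease the jump size. In either case
\[
|\xi_0^N(k+1)-\xi_0^N(k)|\;\le\;\frac{\xi^N_0(k)}{N}+\frac{\xi^N_0(k)}{N}\cdot\frac{\xi^N_{-\tau N}(k)}{N}.
\]

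Then I translate into the rescaled processes. Pick $s=J_k/N$, where $J_0=0$, so that $X^N(Ns)=\xi^N(k)$ by \eqref{chain}. Since $k<F(NT)$, we have $s\in[0,T]$. By \eqref{eq:Y} and \eqref{eq:defZ},
\[
\frac{\xi^N_0(k)}{N}=Y^N(s),\qquad \frac{\xi^N_{-\tau N}(k)}{N}=Z^N(s).
\]
Hence each step is bounded by $Y^N(s)+Y^N(s)Z^N(s)\le H^N(T)$, and summing the $q$ terms gives the claim almost surely.

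There is no real obstacle. The only delicate points are the bookkeeping just described: the truncation $\vee 0$ in \eqref{eq:rates-}, which only helps, and the identification of the discrete index $k$ with a time $s\in[0,T]$, which forces the range restriction on $m+q$.
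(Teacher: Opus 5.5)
Your proof is correct and matches the paper's argument: you telescope into one-step increments, bound each by $\xi^N_0(k)/N+\xi^N_0(k)\,\xi^N_{-\tau N}(k)/N^2$ (the truncation $\vee 0$ only shrinks the jump), and recognize this as $Y^N(s)+Y^N(s)Z^N(s)$ at $s=J_k/N\in[0,T]$. Your explicit restriction $m+q\le F(NT)$ is what the paper uses tacitly when it bounds each summand by ``the supremum of the chain up to time $NT$''. The restriction is genuinely needed: for fixed large $m$ the bound fails with positive probability, since $H^N(T)$ sees nothing after time $NT$. Because your bound is pathwise, it also covers the random indices $m,q$ built from $F$ in the later applications.
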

\begin{proof}
	Each jump multiplies the current population by either $1+1/N$ or by $1 - x_{- \tau N }/N^2$. In particular, the jump factor is  at most $1 + 1/N$ and at least $1 - x_{- \tau N }/N^2$. Therefore, for each $n\in \N_{0}$,
	\begin{equation*}
			|\xi_{0}^{N}(n+1) - \xi_{0}^{N}(n)| \;\leq\; \xi_{0}^{N}(n)\max\left\{\frac{1}{N}, \frac{\xi_{- \tau N }^{N}(n)}{N^2}\right\}\;\leq\; \xi_{0}^{N}(n)\left(\frac{1}{N} + \frac{\xi_{- \tau N }^{N}(n)}{N^2} \right).
	\end{equation*}
	Thus, for any $m,q \in \N_{0}$,
	\begin{equation*}
		\begin{split}
			|\xi_{0}^{N}(m+q) - \xi_{0}^{N}(m)| & \;\leq\; \sum_{j=0}^{q-1}\left|  \xi_{0}^{N}(m+1+j) - \xi_{0}^{N}(m+j) \right| \\ &\;\leq\; \sum_{j=0}^{q-1} \xi_{0}^{N}(m+j)\left( \frac{1}{N} +  \frac{\xi_{-\tau N}^{N}(m+j)}{N^2}\right). 
		\end{split}
	\end{equation*}
	Since each parcel in the sum above is bounded from above by the supremum of the chain up to time $NT$, we obtain that
	\begin{equation*}
		|\xi_{0}^{N}(m+q)-\xi_{0}^{N}(m)| \;\leq\; q H(T)\,,
	\end{equation*}
	concluding the proof.
\end{proof}

	Recall from \eqref{eq:diff_Y_Z} that 
		\[D^N(s)\; :=\; Y^{N}(s-\tau)-Z^{N}(s)  \;=\;  \frac{1}{N}\Big(X_{0}^N(N(s-\tau))-X_{- \tau N}^N(Ns)\Big)\,.\]

\begin{proposition}[Replacement Lemma]\label{lem:replacement}
	Define the integrated error
	\begin{equation*}
		I_N(t) \;:=\; \frac{1}{2}\int_{\tau}^t Y^N(s) D^N(s)\,ds\,.
	\end{equation*}
	Then, for every $T > \tau$,
	\begin{equation*}
	\lim_{N\to\infty} \sup_{t\in [\tau,T]} |I_N(t)| = 0
	\end{equation*}
	in probability.
\end{proposition}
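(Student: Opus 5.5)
The idea is to express both terms of $D^N(s)$ through the embedded chain $\xi^N$, so that $D^N(s)$ becomes a difference of two values of $\xi_0^N$ at random indices. Lemma~\ref{lem:timesdifferences} then controls this difference by the gap between the indices times $H^N(T)$. That gap is a centred Poisson fluctuation of order $\sqrt N$, which is negligible after dividing by $N$.

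\emph{Indexing.} Because every transition shifts the window by one, $X^N_{-\tau N}(Ns)=\xi_0^N\big((F(Ns)-\tau N)\vee 0\big)$. When $F(Ns)\le \tau N$ this equals the initial value $\mu N=\xi_0^N(0)$, since all entries of $\xi^N(0)$ equal $\mu N$. Also $X_0^N(N(s-\tau))=\xi_0^N(F(N(s-\tau)))$. Set $a_s:=(F(Ns)-\tau N)\vee 0$ and $b_s:=F(Ns-\tau N)$. For $s\in[\tau,T]$ both indices are at most $F(NT)$. Applying Lemma~\ref{lem:timesdifferences} with $m=a_s\wedge b_s$ and $q=|a_s-b_s|$ gives, almost surely,
\[
|D^N(s)|\;\le\;\frac{1}{N}\,|a_s-b_s|\,H^N(T).
\]
One should check that the proof of Lemma~\ref{lem:timesdifferences} only uses the chain up to index $F(NT)$, which is the case here. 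Since $b_s\ge 0$, we have $|a_s-b_s|\le |F(Ns)-F(Ns-\tau N)-\tau N|$. The error from writing $\lfloor\tau N\rfloor$ instead of $\tau N$ is at most $1$.

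\emph{Poisson fluctuation.} Writing $\widehat F(r):=F(r)-r$, which is a martingale, we get
\[
\sup_{s\in[\tau,T]}|a_s-b_s|\;\le\;2\sup_{r\le NT}|\widehat F(r)|+1 .
\]
By Doob's $L^2$ inequality, $\E\big[\sup_{r\le NT}|\widehat F(r)|^2\big]\le 4NT$. Hence $\sup_{s\in[\tau,T]}|a_s-b_s|=O_{\PP}(\sqrt N)$.

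\emph{Tightness of $H^N(T)$.} We have $H^N(T)\le \sup_{[0,T]}Y^N+\sup_{[0,T]}Y^N\cdot\sup_{[0,T]}Z^N$. By Cauchy--Schwarz together with Lemmas~\ref{lem:momentsbound1} and~\ref{lem:momentsbound2}, $\sup_N\E[H^N(T)]<\infty$. Combining the three steps,
\[
\sup_{t\in[\tau,T]}|I_N(t)|\;\le\;\frac{T}{2}\,\sup_{[0,T]}Y^N\cdot\frac{2\sup_{r\le NT}|\widehat F(r)|+1}{N}\,H^N(T).
\]
This is a product of two tight random variables and one that is $O_{\PP}(N^{-1/2})$, so it tends to $0$ in probability.

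\emph{Main obstacle.} The delicate step is the exact identification of the past in space, $X^N_{-\tau N}(Ns)$, with $\xi_0^N$ at the random index $F(Ns)-\tau N$. This is the space--time link, and it needs care at the boundary $F(Ns)\le\tau N$ and with the floor in $\lfloor\tau N\rfloor$. It must be verified that the clamped index still satisfies the bound $|a_s-b_s|\le |F(Ns)-F(Ns-\tau N)-\tau N|+1$. Once this is settled, the remaining estimates are routine.
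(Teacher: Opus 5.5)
Your proof is correct. After the common first step it takes a different and shorter route than the paper.

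\textbf{Common start.} Both arguments identify $X^N_{-\tau N}(Ns)$ with $\xi^N_0$ at index $F(Ns)-\lfloor\tau N\rfloor$, or with $\mu N$ when that index is nonpositive. Both then invoke Lemma~\ref{lem:timesdifferences}.

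\textbf{The paper's route.} It splits on the event $\{F(Ns)\le\lfloor\tau N\rfloor\}$ and controls it by a Chernoff bound $e^{-Nc(s,\tau)}$. It bounds the index gap $|F(Ns)-F(N(s-\tau))-\lfloor\tau N\rfloor|$ through fixed-$s$ moment computations: a centred Poisson part plus a Skellam part, with a case split $s\le 2\tau$ versus $s>2\tau$. It finishes with an $L^1$ estimate of $\sup_t|I_N(t)|$ via Cauchy--Schwarz and $\int_\tau^T\E|D^N(s)|^2\,ds$.

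\textbf{Your route.} You absorb the boundary case at no cost, because $x\mapsto x\vee 0$ is $1$-Lipschitz and fixes $b_s\ge 0$. You control the gap uniformly in $s$ by writing it as an increment of the compensated process $\widehat F$ and applying Doob's maximal inequality. This gives the pathwise bound $\sup_{s\in[\tau,T]}|D^N(s)|\le N^{-1}H^N(T)\bigl(2\sup_{r\le NT}|\widehat F(r)|+1\bigr)$, and everything reduces to ``tight times $O_{\PP}(N^{-1/2})$''.

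\textbf{What each buys.} Your route needs no large-deviation estimate and no Skellam moments. It also avoids a degeneracy in the paper's argument. The exponent $c(s,\tau)$ vanishes as $s\downarrow\tau$, so the exponential term only becomes small after integrating in $s$. There it contributes a polynomial rate rather than the stated $e^{-Nc(T,\tau)/2}$; the paper's conclusion is unaffected. You lose nothing either. Every factor has moments of all orders, so H\"older and Doob's $L^p$ inequality turn your bound into $\E\bigl[\sup_{s\in[\tau,T]}|D^N(s)|^p\bigr]\lesssim N^{-p/2}$. This is uniform in $s$ and stronger than the paper's fixed-$s$ estimate.

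Your caveat that Lemma~\ref{lem:timesdifferences} is only valid for indices not exceeding $F(NT)$ is correct, and the paper uses the lemma in the same restricted way.
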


\begin{proof}

 By the definitions of $X^{N}$ and $F(t)$, we have
	\begin{equation*}
		X_{j}^{N}(t) \;=\; \xi_{j}^{N}(F(t)) \;=\; \xi_{0}^{N}(F(t)+j)\mathds{1}_{\{ F(t) + j > 0  \}} + N \mu \mathds{1}_{\{ F(t) + j \leq 0 \}} 
	\end{equation*}
	for all integers $j \leq 0$. 	Then, we may write	
	\begin{equation*}
		\begin{split}
			|D^{N}(s)| &\;=\; \frac{1}{N} \left|\xi_{0}^{N}(F(N(s-\tau))) - \xi_{- \tau N }^{N}(F(Ns))  \right| \\
			& \;\leq\; \frac{1}{N} \left|\xi_{0}^{N}(F(N(s-\tau))) - \xi_{0}^{N}(F(Ns)-\lfloor \tau N \rfloor)  \right| \mathds{1}_{\left\{ F(Ns) > \lfloor \tau N \rfloor\right\}} \\ &\quad +  \frac{1}{N} \left|\xi_{0}^{N}(F(N(s-\tau))) - \mu N \right| \mathds{1}_{\left\{ F(Ns) \leq \lfloor \tau N \rfloor\right\}}.
		\end{split}
	\end{equation*}
	Applying Lemma~\ref{lem:timesdifferences}, first with $q= q_{N}^{s}(\tau) = |F(Ns) - F(N(s-\tau)) - \lfloor \tau N \rfloor |$ and $m = m_{N}^{s}(\tau) = F(N(s-\tau)) \wedge (F(Ns)-\lfloor \tau N \rfloor)$ for the first term then with $q = F(N(s-\tau))$ and $m = 0$ for the second term, it almost surely holds that
	\begin{equation*}
		|D_{N}(s)| \;\leq\; H^{N}(T)\Big[N^{-1}q_{N}^{s}(\tau) + N^{-1}F(Ns)\mathds{1}_{\{ F(Ns) \leq \tau N\}}\Big].
	\end{equation*}
	Thus, for each $p\geq1$,
	\begin{equation*}
			|D_{N}(s)|^{p} \;\lesssim\; (H^{N}(T))^{p}\Big[N^{-p}(q_{N}^{s}(\tau))^{p} + N^{-p}(F(Ns))^{p} \mathds{1}_{\{ F(Ns) \leq \tau N\}}\Big].
	\end{equation*}
	Taking expectation on both sides, applying Cauchy-Schwarz inequality, using Lemmas  \ref{lem:momentsbound1} and \ref{lem:momentsbound2}, and a moment estimate for the Poisson distribution, we obtain
	\begin{equation}\label{Dmoments}
			\E\big[|D_{N}(s)|^{p}\big] \;\lesssim\; N^{-p}\Big(\E\left[(q_{N}^{s}(\tau))^{2p}\right]\Big)^{1/2} + \PP( F(Ns) \leq \tau N)\,.
	\end{equation}
	Now, we need to estimate both terms in the inequality \eqref{Dmoments}, we start with the probability term. By an application of Markov's inequality, for each $\theta > 0$,
	\begin{equation}\label{eq:Poissontail}
		\begin{split}
			\mathbb{P}\left( F(Ns) \leq  \tau N  \right) &\;=\; \mathbb{P}\left( e^{-\theta F(Ns)} \geq e^{-N\tau\theta}\right) \\ 
			&\;\leq\; e^{N \tau \theta}\E\left[e^{-\theta F(Ns)}\right] \;=\; \exp\left(N\tau \theta + Ns(e^{-\theta}-1)\right).
		\end{split}
	\end{equation}
	Optimizing over $\theta>0$, we obtain
	\begin{equation*}
		\mathbb{P}\left( F(Ns) \leq  \tau N  \right)  \;\leq\;  e^{-Nc(s,\tau)}\,,
	\end{equation*}
	where $c(s,\tau) = \tau(s/\tau-1- \log(s/\tau))$.
	Since  $x-1-\log(x)>0$ for $x>1$, the probability above decays exponentially provided $s>\tau$. To estimate the first term in the last inequality of \eqref{Dmoments}, define $q_{N}(\tau) = |F(N\tau) - N\tau|$ and $k_{N}^{s}(\tau) = q_{N}^{s}(\tau) - q_{N}(\tau)$. Then,
	\begin{equation*}
			\E\left[(q_{N}^{s}(\tau))^{2p}\right] \;\leq\; 2^{2p-1}\left(\E\left[\left(k_{N}^{s}(\tau)\right)^{2p}\right] + \E\left[\left(q_{N}(\tau)\right)^{2p}\right]\right).
	\end{equation*}
	Since $q_{N}(\tau)$ is the modulus of a centered Poisson distribution with parameter $N\tau$, we may use estimates for the central moments of Poisson random variables to obtain that for each $r\geq1$,
	\begin{equation*}
		\E\left[ \left( q_{N}(\tau)\right)^{2r}\right] \;\lesssim\; (N\tau)^{r}.
	\end{equation*}
	We now use  the reverse triangle inequality we may bound the moments of $k_{N}^{s}(\tau)$ as  follows:
	\begin{equation*}
		\begin{split}
			\E\left[ \left(k_{N}^{s}(\tau)\right)^{2r}\right] & \;=\;  \E \left[ \left| q_{N}^{s}(\tau) - q_{N}(\tau)\right|^{2r}\right]\\ 
			& \;\leq\; \E\left[\left(F(Ns)-F(N(s-t))-F(N\tau) - \lfloor \tau N \rfloor + N\tau \right)^{2r}\right] \\ &\;\lesssim\; \E\left[\left( F(Ns)-F(N(s-\tau)) - F(N\tau)\right)^{2r}\right] + 1\,.
		\end{split}
	\end{equation*}
	Let $G_N(s,\tau) := F(Ns) - F(N(s-\tau)) - F(N\tau)$ for $s>\tau$. Since $F(t)$ is a Poisson process, the distribution of $G_N(s,\tau)$ depends on how $\tau$ and $s-\tau$ are ordered. If $s \leq 2 \tau$, we obtain that $F(Ns) - F(N\tau)$ and $F(N(s-\tau))$ are independent and $F(Ns) - F(N\tau) \sim F(N(s-\tau))$. So, in this case $G_N(s,\tau) \sim \mathrm{Skellam}(N(s-\tau),N(s-\tau))$ and we obtain\footnote{$\mathrm{Skellam}(\lambda_1,\lambda_2)$ is the distribution obtained from the difference $X_1-X_2$, where  $X_1\sim \mathrm{Poisson}(\lambda_1)$ and $X_2\sim \mathrm{Poisson}(\lambda_2)$ are independent random variables.}
\begin{equation*}
	\E\left[(G_N(s,\tau))^{2r}\right] \;\lesssim\; \left(N(s-\tau)\right)^{r}+ N(s-\tau)\,.
\end{equation*}
	Otherwise, if $s>2\tau$, we get that $F(Ns) - F(N(s-\tau))$ and $F(N\tau)$ are independent and $F(Ns) - F(N(s-\tau)) \sim F(N\tau)$, which lead us that $G_N(s,\tau) \sim \mathrm{Skellam}(N\tau,N\tau)$ and
	\begin{equation*}
		\E\left[(G_N(s,\tau))^{2r}\right] \;\lesssim\; \left(N\tau\right)^{r} + N\tau\,.
	\end{equation*}
	Now, joining the bounds we obtain the following moment estimate
	\begin{equation*}
		\E\big[ |D_{N}(s)|^{p}\big] \;\lesssim\; N^{-p/2} + e^{-Nc(s,\tau)}\,.
	\end{equation*}
		Applying the  Cauchy-Schwarz inequality and 
		Lemma~\ref{lem:momentsbound1},
	\begin{equation*}
		\begin{split}
			\E\bigg[\sup_{t \in [\tau,T]} |I_N(t)| \bigg] &\;\lesssim\; \E\bigg[ \int_{\tau}^T Y^N(s) |D^N(s)|ds \bigg] \\ 
			& \;\leq\; \bigg(\E\bigg[ \Big(\sup_{t \in [0,T]}Y^{N}(t)\Big)^2\bigg] \bigg)^{1/2} \bigg(\E\bigg[\Big(\int_{\tau}^{T} |D_{N}(s)|ds\Big)^2\bigg]\bigg)^{1/2}\\
			& \;\lesssim\; \bigg(\int_{\tau}^{T}\E\Big[ |D_{N}(s)|^2\Big]ds\bigg)^{1/2} \;\lesssim\; N^{-1/2}+ e^{-Nc(T,\tau)/2}\,.
		\end{split}
	\end{equation*}
	Then, applying the  Markov inequality leads us to
	\begin{equation*}
		\lim_{N\to\infty} \sup_{t\in [\tau,T]} |I_N(t)| \;=\; 0
	\end{equation*}
	in probability.
\end{proof}

\begin{proposition}\label{lem:Ztildelem}
We have for every $T>0$ that
\begin{equation*}
	\lim_{N\to\infty} \sup_{t\in [0,T]} \Big|\int_{0}^{t}Y^{N}(s)\widetilde{Z}^{N}(s)\,ds\Big|\;=\;0
\end{equation*}	
in probability.
	\end{proposition}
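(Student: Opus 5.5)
Since $\widetilde{Z}^N(s)\ge 0$ and $Y^N(s)\ge 0$, the integrand is nonnegative. Hence the supremum over $t\in[0,T]$ of the absolute value of the integral is attained at $t=T$:
\begin{equation*}
\sup_{t\in[0,T]}\Big|\int_0^t Y^N(s)\widetilde{Z}^N(s)\,ds\Big| \;=\; \int_0^T Y^N(s)\widetilde{Z}^N(s)\,ds\,.
\end{equation*}
It therefore suffices to show that the expectation of the right-hand side tends to zero and then apply Markov's inequality. This reduces a convergence-in-probability statement for a supremum to a single $L^1$ estimate.

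For the $L^1$ estimate, we use Fubini's theorem and then the Cauchy--Schwarz inequality pointwise in $s$:
\begin{equation*}
\E\Big[\int_0^T Y^N(s)\widetilde{Z}^N(s)\,ds\Big] \;\le\; \int_0^T \big(\mathfrak{m}_2^N(s)\big)^{1/2}\Big(\E\big[(\widetilde{Z}^N(s))^2\big]\Big)^{1/2}\,ds\,.
\end{equation*}
By Lemma~\ref{lem:momentsbound1}, $\sup_N\sup_{s\in[0,T]}\mathfrak{m}_2^N(s)<\infty$. By Lemma~\ref{lem:momentsbound3} with $p=2$ and, say, $q=2$, we have $\E[(\widetilde{Z}^N(s))^2]\le C_{2,2,T}N^{-1}$ uniformly in $s\in[0,T]$. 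The whole expression is thus $\lesssim T N^{-1/2}$, which tends to zero.

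There is essentially no obstacle; the only points to check are the following.
\begin{itemize}
\item The nonnegativity of $\widetilde{Z}^N$, which is immediate from $Z^N-(Z^N\wedge N)\ge 0$.
\item The measurability needed for Fubini, which is automatic since the processes are càdlàg.
\end{itemize}
If one preferred not to use the monotonicity trick, one could instead bound the supremum by $\int_0^T Y^N|\widetilde{Z}^N|\,ds$ directly; the argument is identical.
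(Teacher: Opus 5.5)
Your proof is correct and is essentially the paper's argument: Cauchy--Schwarz pointwise in $s$, combined with Lemma~\ref{lem:momentsbound1} and Lemma~\ref{lem:momentsbound3}, and then Markov's inequality. The differences are cosmetic: you make the reduction of the supremum to $t=T$ explicit via nonnegativity of the integrand, and your choice $q=2$ gives the rate $N^{-1/2}$ instead of the paper's $N^{-1}$ (which corresponds to $q=4$); either rate suffices.
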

\begin{proof}
By Cauchy-Schwarz inequality and Lemma~\ref{lem:momentsbound3},
\begin{equation*}
	\E\Big[\sup_{t\in[0,T]}\Big|\int_{0}^{t}Y^{N}(s)\widetilde{Z}^{N}(s)\,ds\Big|\Big]
	\;\leq\; \int_0^T \E\big[Y^N(s)^2\big]^{1/2} \E\big[\widetilde{Z}^{N}(s)^2\big]^{1/2} ds
	\;\lesssim\; \frac{1}{N}\,,
\end{equation*}
as wanted.
\end{proof}

\subsection{Convergence at negative times and tightness}
In this section we deal with tightness of the sequence of processes $\{Y^N(t) : t \in [-\tau,T]\}_{N\geq 1}$. We start 
by studying the convergence of the process at negative times.

\begin{proposition}\label{lem:initialcondition}
	The entire initial condition converges uniformly on the interval $[-\tau,0]$, i.e, 
	\begin{equation*}
		\lim_{N \rightarrow \infty}\sup_{t \in [-\tau,0]}|Y^{N}(t)-\mu| \;=\; 0
	\end{equation*}
	in probability. In particular, the sequence $\{Y^N(t) : t \in [-\tau,0]\}_{N\geq 1}$ is tight in $D([-\tau,0], \mathbb{R})$ with respect to the Skorokhod topology.
\end{proposition}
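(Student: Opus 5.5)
For $t\in[-\tau,0)$ we have $Y^N(t)=X^N_{-\tau N}(N(t+\tau))/N$. Writing $s=t+\tau\in[0,\tau)$ and using the representation from the proof of Proposition~\ref{lem:replacement},
\[
X^N_{-\tau N}(Ns)=\xi^N_0\big(F(Ns)-\lfloor\tau N\rfloor\big)\mathds{1}_{\{F(Ns)>\lfloor\tau N\rfloor\}}+N\mu\,\mathds{1}_{\{F(Ns)\le\lfloor\tau N\rfloor\}}.
\]
The plan is to show that, with high probability, $F(Ns)-\lfloor\tau N\rfloor$ is at most of order $\sqrt N$ for all $s\in[0,\tau)$. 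On that event the coordinate $X^N_{-\tau N}$ is either still equal to $\mu N$, or equals $\xi^N_0(k)$ for some $k\lesssim\sqrt N$. In the second case it has moved from $\mu N$ by at most $kH^N(T)$ by Lemma~\ref{lem:timesdifferences}, with $m=0$ and $q=k$.

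First I control the counting process. Since $F$ is a $\mathtt{PPP}(1)$, Doob's maximal inequality applied to the martingale $F(r)-r$ gives
\[
\PP\Big(\sup_{r\le N\tau}|F(r)-r|>A\sqrt N\Big)\le \frac{\tau}{A^2}.
\]
On the complement, $\sup_{s<\tau}\big(F(Ns)-\lfloor\tau N\rfloor\big)_+\le A\sqrt N+1$. Using the display above and Lemma~\ref{lem:timesdifferences}, this yields
\[
\sup_{t\in[-\tau,0)}|Y^N(t)-\mu|\le \frac{1}{N}\sup_{k\le A\sqrt N+1}|\xi^N_0(k)-\xi^N_0(0)|\le \frac{(A\sqrt N+1)\,H^N(\tau)}{N}.
\]
Second, $H^N(\tau)$ is tight by Lemmas~\ref{lem:momentsbound1} and \ref{lem:momentsbound2}: its expectation is bounded via Cauchy--Schwarz on $\sup Y^N\cdot\sup Z^N$. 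Markov's inequality then makes $H^N(\tau)\le B$ hold with probability close to one, uniformly in $N$. One detail is that Lemma~\ref{lem:timesdifferences} bounds increments up to index $F(NT)$ through a supremum over times in $[0,T]$. Indices $k\le A\sqrt N+1$ are reached before time $N\tau$ unless $F(N\tau)<A\sqrt N$, and that event has exponentially small probability. So it suffices to apply the lemma with $T=\tau$.

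Combining the two estimates, the bound is at most $AB/\sqrt N\to0$ outside an event of probability $\tau/A^2+\epsilon$. Letting $N\to\infty$ and then $A,B\to\infty$ gives uniform convergence in probability on $[-\tau,0)$. The point $t=0$ is handled by the same estimate with $s=\tau$, where $Y^N(0)=X^N_0(0)/N=\mu$ holds directly. Tightness in $D([-\tau,0],\R)$ then follows because uniform convergence in probability to the continuous (constant) function $\mu$ implies convergence in distribution in the Skorokhod topology, and a convergent sequence is tight.

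The main obstacle is the uniformity in $t$. The maximal inequality for $F(r)-r$ handles it cleanly, together with the bookkeeping that the index $k$ lies within the window on which $H^N$ controls increments.
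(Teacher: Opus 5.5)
Your proof is correct and follows essentially the paper's route. Lemma~\ref{lem:timesdifferences} with $m=0$ bounds the deviation by $N^{-1}H^N$ times the overshoot of $F$ beyond $\lfloor\tau N\rfloor$, and this is controlled by the $\sqrt N$ Poisson fluctuations together with the moment bounds on $H^N$. The one difference is that the paper needs no maximal inequality: since $F$ is nondecreasing, $\sup_{s\in[0,\tau]}(F(Ns)-\tau N)_+=(F(N\tau)-\tau N)_+$, so the uniformity in $t$ you flag as the main obstacle comes for free, and Cauchy--Schwarz then gives $\E[\sup_{t\in[-\tau,0]}|Y^N(t)-\mu|]\lesssim N^{-1/2}$ directly.
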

\begin{proof}
	For each $t \in [-\tau,0]$, by applying Lemma~\ref{lem:timesdifferences} it holds almost surely that
	\begin{align*}
		|Y^{N}(t) - \mu| &\;=\; N^{-1}\big|\xi_{0}^{N}(F(N(\tau+t))-\tau N)-N\mu\big|\mathds{1}_{\left\{F(N(\tau+t))>\tau N\right\}}\\ 
		&\;\leq\;  N^{-1}H^{N}(T)\big|F(N(t+\tau))-\tau N\big|\mathds{1}_{\left\{F(N(\tau+t))>\tau N\right\}}\,.
	\end{align*}
	Now, taking the supremum in the interval $[-\tau,0]$ and using the fact that $F$ is non-decreasing, we obtain that almost surely
	\begin{align*}
		\sup_{t \in [-\tau,0]}|Y^{N}(t)-\mu| &\;\leq\; N^{-1}H^{N}(T)\sup_{s \in [0,\tau]}|F(Ns)-\tau N|\mathds{1}_{\left\{F(Ns) >\tau N)\right\}} \\ 
		&\;=\; N^{-1}H^{N}(T)\sup_{s \in [0,\tau]}(F(Ns)-\tau N)_{+} \\ 
		&\;=\; N^{-1}H^{N}(T)(F(N\tau)-\tau N)_{+}
	\end{align*}
	Taking expectations, using applying Cauchy-Schwarz and Lemmas  \ref{lem:momentsbound1} and \ref{lem:momentsbound2}, we obtain
	\begin{align*}
		\E\left[|Y^{N}(t) - \mu|\right] &\;\lesssim\; N^{-1}(\E\left[(F(N\tau)-\tau N)^2\right])^{1/2}\;\lesssim\; N^{-1/2}\,,
	\end{align*}
 concluding the proof.
\end{proof}

\begin{proposition}\label{lem:jumps}
	For each $T>0$,
	\begin{equation*}
		\lim_{N \rightarrow \infty}\sup_{t \in [-\tau,T]}|Y^{N}(t)-Y^{N}(t^{-})| \;=\; 0
	\end{equation*}
	in probability.
\end{proposition}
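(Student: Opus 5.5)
The jumps of $Y^N$ are of two kinds: those at positive times, which are jumps of the rescaled present coordinate $X^N_0(N\cdot)/N$, and those at negative times, which are jumps of $X^N_{-\tau N}(N(\cdot+\tau))/N$. There is also possibly one jump at $t=0$, where the definition \eqref{eq:Y} switches from one coordinate to the other. The plan is to bound every jump size by $H^N(T)/N$ times a quantity that stays controlled, and then use the moment bounds of Lemmas~\ref{lem:momentsbound1} and \ref{lem:momentsbound2}.

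\emph{Positive times.} For $t\in(0,T]$, a jump of $Y^N$ at $t$ corresponds to one step of the discrete chain $\xi^N_0$. By Lemma~\ref{lem:timesdifferences} with $q=1$,
\[
|Y^N(t)-Y^N(t^-)|\le N^{-1}H^N(T)
\]
almost surely. By Cauchy--Schwarz together with Lemmas~\ref{lem:momentsbound1} and \ref{lem:momentsbound2}, $\E[H^N(T)]\lesssim 1$ uniformly in $N$. Markov's inequality then gives
\[
\sup_{t\in(0,T]}|Y^N(t)-Y^N(t^-)|\to 0
\]
in probability.

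\emph{Negative times.} For $t\in[-\tau,0)$, we have $Y^N(t)=\mu$ unless $F(N(t+\tau))>\tau N$. In that case $Y^N(t)=N^{-1}\xi^N_0(F(N(t+\tau))-\tau N)$, and any jump is again a single step of $\xi_0^N$. Such a step occurs at discrete time at most $F(N\tau)-\tau N\le F(NT)$, so it lies inside the window controlled by $H^N(T)$. This gives the same bound $N^{-1}H^N(T)$. Alternatively, one can simply invoke Proposition~\ref{lem:initialcondition}: since $\sup_{[-\tau,0]}|Y^N-\mu|\to0$, every jump inside $[-\tau,0)$ is at most $2\sup_{[-\tau,0]}|Y^N-\mu|$, and this handles the negative times directly.

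\emph{The junction at $t=0$.} Here
\[
Y^N(0)-Y^N(0^-)=N^{-1}\bigl(X_0^N(0)-X^N_{-\tau N}(N\tau^-)\bigr)=\mu-Y^N(0^-).
\]
This is bounded by $\sup_{t\in[-\tau,0)}|Y^N(t)-\mu|$, which tends to zero in probability by Proposition~\ref{lem:initialcondition}.

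Combining the three cases by a union bound proves the claim. The main obstacle is the careful bookkeeping of which coordinate of $X^N$ is read at $t=0$ and at negative times. One must check that the discrete indices involved stay within the range where $H^N(T)$ controls the increments, that is, below $F(NT)$, which holds since $\tau\le T$ can be assumed; otherwise one enlarges $T$. Apart from this check, the argument reduces to the moment bounds already established.
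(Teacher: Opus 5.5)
Your proof is correct and follows the paper's route: each jump caused by a single Poisson event is bounded by $N^{-1}H^N(T)$ via Lemma~\ref{lem:timesdifferences}, $\E[H^N(T)]$ is controlled by Lemmas~\ref{lem:momentsbound1} and~\ref{lem:momentsbound2}, and Markov's inequality concludes. Your separate treatment of $t=0$ through Proposition~\ref{lem:initialcondition} is a needed addition: $Y^N(0)-Y^N(0^-)=\mu-Y^N(0^-)$ spans about $(F(N\tau)-\lfloor\tau N\rfloor)_+$ steps of $\xi^N_0$ and is typically of order $N^{-1/2}$ rather than $N^{-1}$, which the paper's one-line bound does not cover; likewise your reduction to $T\geq\tau$ (harmless, since the supremum is monotone in $T$) is what keeps the indices in the range where Lemma~\ref{lem:timesdifferences} applies at negative times.
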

\begin{proof}
	For each $N\in \N$, we may use Lemmas~\ref{lem:timesdifferences}, \ref{lem:momentsbound1} and \ref{lem:momentsbound2} to obtain the bound
	\begin{align*}
		\E\bigg[\sup_{t \in [-\tau,T]}|Y^{N}(t)\!-\!Y^{N}(t^{-})|\bigg] \leq N^{-1} \E\bigg[ H^{N}(T)\!\sup_{t \in [0,T]}\!\!(F(Nt)\!-\!F((Nt)^{-})\bigg] \lesssim N^{-1}.
	\end{align*}
	Hence, using Markov's inequality we may conclude.
\end{proof}

\begin{proposition}\label{lem:tight}
	The sequence $\{Y^N(t) : t \in [-\tau,T]\}_{N\geq 1}$ is tight in $D([-\tau,T], \mathbb{R})$ with respect to the Skorokhod topology.
\end{proposition}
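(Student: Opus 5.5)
We will verify the two conditions of Aldous' criterion (as in \cite{Bill}): for each $t$ the family $\{Y^N(t)\}_N$ is tight in $\R$, and for every $\eps>0$,
\[
\lim_{\gamma\downarrow 0}\limsup_{N\to\infty}\sup_{\sigma}\sup_{0\le \theta\le\gamma}\PP\big(|Y^N(\sigma+\theta)-Y^N(\sigma)|>\eps\big)=0 .
\]
Here $\sigma$ ranges over stopping times bounded by $T$.

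The pointwise condition is immediate. For $t\ge 0$ it follows from Lemma~\ref{lem:momentsbound1} and Markov's inequality, since $\sup_N \E[\sup_{t\le T}Y^N(t)]<\infty$. On $[-\tau,0]$ we use Proposition~\ref{lem:initialcondition}.

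For the modulus condition we avoid the semimartingale decomposition and use the pathwise control of Lemma~\ref{lem:timesdifferences}. The key observation is that $Y^N$, on both $[-\tau,0)$ and $[0,T]$, is $N^{-1}\xi^N_0$ evaluated at a count of jumps of the Poisson clock $F$. On $[-\tau,0)$ that count is $F(N(t+\tau))-\tau N$, truncated at $0$ where the value is the constant $\mu$. On $[0,T]$ it is $F(Nt)$. The two pieces glue consistently at $t=0$ up to a single jump. Hence for $s\le t$ in $[-\tau,T]$, Lemma~\ref{lem:timesdifferences} gives almost surely
\[
|Y^N(t)-Y^N(s)|\;\le\; N^{-1}H^N(T+\tau)\,\big(F(N(t+\tau))-F(N(s+\tau))\big)
\]
on $[-\tau,0]$. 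The analogous bound with $F(Nt)-F(Ns)$ holds on $[0,T]$, and crossing $0$ is handled by adding the two pieces. This produces a modulus of the form $N^{-1}H^N\cdot(\text{Poisson increment over an interval of length }\le\gamma N)$.

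For a stopping time $\sigma$ and $\theta\le\gamma$, the strong Markov property of $F$ makes $F(N(\sigma+\theta))-F(N\sigma)$ Poisson with mean $N\theta$. Cauchy--Schwarz then gives
\[
\E\big[|Y^N(\sigma+\theta)-Y^N(\sigma)|\big]\lesssim \E[H^N(T+\tau)^2]^{1/2}\,N^{-1}\big(N^2\gamma^2+N\gamma\big)^{1/2}\lesssim \gamma+ (\gamma/N)^{1/2}.
\]
The second moment of $H^N$ is bounded uniformly in $N$ by Lemmas~\ref{lem:momentsbound1} and~\ref{lem:momentsbound2}, applied with $p=4$ to handle the product $Y^NZ^N$. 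Markov's inequality then yields the modulus condition.

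The main obstacle is the bookkeeping in the second paragraph. One has to check that the time parametrizations on $[-\tau,0)$ and $[0,T]$ really are both of the form ``$\xi^N_0$ at a nondecreasing Poisson count'' with correctly matched indices. The truncation where the count is $\le \tau N$ must only decrease increments, and the junction at $t=0$ must contribute at most one jump. Once that is established, the estimate is the routine Poisson computation above, uniform in the stopping time. Alternatively, one can bypass stopping times entirely: the same bound controls the modulus of continuity $w'_{Y^N}(\gamma)\le N^{-1}H^N\sup_{|u-v|\le\gamma}|F(Nu)-F(Nv)|$, and this modulus tends to $\gamma$ by the law of large numbers for $F$.
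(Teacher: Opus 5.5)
Your strategy (the pathwise bound from Lemma~\ref{lem:timesdifferences} fed into Aldous' criterion) is sound. However, the step you flagged as the main obstacle is resolved incorrectly: the two pieces do \emph{not} glue up to a single jump at $t=0$. For $t<0$ you have $Y^N(t)=N^{-1}\xi^N_0\big((F(N(t+\tau))-\lfloor\tau N\rfloor)_+\big)$, hence $Y^N(0^-)=N^{-1}\xi^N_0\big((F(N\tau^-)-\lfloor\tau N\rfloor)_+\big)$, whereas $Y^N(0)=N^{-1}\xi^N_0(0)=\mu$. At $t=0$ the index therefore falls from $(F(N\tau^-)-\lfloor\tau N\rfloor)_+$, which is of order $\sqrt N$ with probability bounded away from zero, back to $0$. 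Since $\xi^N_0/N$ has drift about $\mu(1-\mu)/(2N)$ per step in that range, $|Y^N(0)-Y^N(0^-)|$ is genuinely of order $N^{-1/2}$. Your pathwise bound for $s<0\le t$ (``add the two pieces'') is therefore false: that increment is not controlled by $N^{-1}H^N$ times the Poisson count over $[s,t]$, however small $t-s$ is.

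A related problem affects the strong Markov step. If Aldous' criterion is applied on $[-\tau,T]$ with the natural filtration of $Y^N$, then $\mathcal F^{Y}_0$ already contains $\{X^N_{-\tau N}(u):u<N\tau\}$. On $\{F(N\tau^-)>\lfloor\tau N\rfloor\}$ this reveals the jump times of $F$ in a final stretch of $[0,N\tau)$, which is the \emph{future} of the clock driving $Y^N$ on $[0,\tau)$. So for stopping times $\sigma\in[0,\tau)$ you cannot conclude $F(N(\sigma+\theta))-F(N\sigma)\sim\mathrm{Poisson}(N\theta)$.

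Both defects are repairable with what is already available. The junction term is at most $N^{-1}H^N(T+\tau)\,(F(N\tau)-\lfloor\tau N\rfloor)_+=O_P(N^{-1/2})$, which is exactly the estimate of Proposition~\ref{lem:initialcondition}, and it is killed by the $\limsup_N$. For the filtration issue you have two options:
\begin{itemize}
\item Run Aldous only on $[0,T]$ with respect to $\sigma(X^N(Nu):u\le t)$, where $F(N\cdot)$ is a genuine Poisson process, and treat $[-\tau,0]$ via Proposition~\ref{lem:initialcondition} together with $Y^N(0)=\mu$. This is the reduction the paper makes.
\item Use your stopping-time-free alternative, taking $0$ as a partition point for $w'$, together with the window bound $N^{-1}\sup\{F(Nv)-F(Nu):\,0\le u\le v\le T+\tau,\ v-u\le 2\gamma\}\le 2\gamma+2\sup_{w\le T+\tau}|N^{-1}F(Nw)-w|$, which tends to $2\gamma$ in probability.
\end{itemize}

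Once repaired, your proof is a lighter route than the paper's. The paper verifies Billingsley's product-moment condition $\E[|W(s)-W(r)|^{2p}|W(t)-W(s)|^{2p}]\le(g(t)-g(r))^{2\alpha}$. That needs H\"older's inequality, independence of the Poisson counts on the adjacent windows $[Nr,Ns]$ and $[Ns,Nt]$, and the maximal bound \cite[Theorem 10.3]{Bill}; the product of two increments is what makes a moment criterion work for a jump process. You need only the same pathwise estimate, tightness of $H^N$, and the law of large numbers for $F$. Moreover, the modulus version controls the uniform modulus $w$ on each piece. It therefore gives continuity of limit points directly and makes Proposition~\ref{lem:jumps} unnecessary.
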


\begin{proof}
	First of all we note that it is enough to prove tightness in $D([0,T], \bb R)$ since for negative times the result follows from Lemma~\ref{lem:initialcondition}.
	A sufficient criterion for the tightness of a sequence of processes $W^{N} = \{W^N(t) : t \in [0,T]\}$ in $D([0,T], \mathbb{R})$ is that the following three conditions hold:
	\begin{itemize}
		\item For every $\varepsilon > 0$, there exists $M > 0$ such that
		\begin{equation*}
			\sup_{N \in \mathbb{N}} \mathbb{P}\left( \sup_{t \in [0,T]} |W^{N}(t)| > M \right) \;\leq\; \varepsilon\,.
		\end{equation*}
		
		\item There exist $p \geq 0$, $\alpha > 1/2$, and a non-decreasing continuous function $g : [0,T] \to \mathbb{R}$ such that, for all $r < s < t$ and all $N \in \mathbb{N}$,
		\begin{equation*}
			\mathbb{E}\Big[ |W^{N}(s) - W^{N}(r)|^{2p} \, |W^{N}(t) - W^{N}(s)|^{2p} \Big]
			\;\leq\; \left( g(t) - g(r) \right)^{2\alpha}.
		\end{equation*}
	\item For all $\eps , \eta>0$, there exists $\delta>0$ such that
	\begin{equation*}
		\begin{aligned}
			&\bb P\big[|Y^N(\delta)-Y^N(0)|\geq \eps\big]\;\leq\; \eta\quad\text{ and } \quad \bb P\big[|Y^N(1-)-Y^N(1-\delta)|\;\geq\; \eps \big]\leq \eta\,.
		\end{aligned}
	\end{equation*}
	\end{itemize}
	Even though it might be well known, we did not find in the literature a clear place where the sufficiency of these three conditions are stated. However, as we will argue below this follows from results in~\cite[Chapter 13]{Bill}. For now we take this as granted.
	The first condition follows directly from Lemma~\ref{lem:momentsbound1}. 
	For the second condition, by Lemma~\ref{lem:timesdifferences}, we obtain that for all $s,t \in [0,T]$, almost surely,
	\begin{equation*}
		|Y^{N}(t) - Y^{N}(s)| \;\leq\; N^{-1} H^{N}(T) \, F([Ns,Nt])\,,
	\end{equation*}
	where $F([Ns,Nt])$ denotes the number of jumps of the process $X^N$ in the time interval $[Ns,Nt]$. Hence, for $r < s < t$ and $p \geq 0$,
	\begin{equation}\label{eq:Ydif}
		\begin{aligned}
			\mathbb{E}\big[ &|Y^{N}(s) - Y^{N}(r)|^{2p} \, |Y^{N}(t) - Y^{N}(s)|^{2p} \big] \\
			&\leq N^{-4p} \, \mathbb{E}\Big[ (H^{N}(T))^{4p} \, (F([Nr,Ns]))^{2p} \, (F([Ns,Nt]))^{2p} \Big].
		\end{aligned}
	\end{equation}
	Applying Hölder's inequality together with Lemmas~\ref{lem:momentsbound1} and~\ref{lem:momentsbound2}, we obtain that, for every $q \geq 1$,
	\begin{equation*}
		\begin{aligned}
			&\mathbb{E}\Big[ |Y^{N}(s) - Y^{N}(r)|^{2p} \, |Y^{N}(t) - Y^{N}(s)|^{2p} \Big] \\
			&\lesssim N^{-4p} \Big( \mathbb{E}\big[(F([Nr,Ns]))^{2pq} (F([Ns,Nt]))^{2pq}\big] \Big)^{1/q}.
		\end{aligned}
	\end{equation*}
	Since $F$ is a Poisson process with rate one, the random variables $F([Nr,Ns])$ and $F([Ns,Nt])$ are independent. Therefore,
	\begin{equation*}
		\begin{aligned}
			\mathbb{E}\big[ &|Y^{N}(s) - Y^{N}(r)|^{2p} \, |Y^{N}(t) - Y^{N}(s)|^{2p} \big] \\
			&\lesssim \Big(N|s-r| + N^{2pq}|s-r|^{2pq}\Big)^{1/q}
			\Big(N|t-s| + N^{2pq}|t-s|^{2pq}\Big)^{1/q}.
		\end{aligned}
	\end{equation*}
	To bound this expression from above, we replace $|s-r|$ and $|t-s|$ by $|t-r|$, obtaining
	\begin{equation*}
		N^{-4p} \left( N^{4p}|t-r|^{4p} + N^{2p+1/q}|t-r|^{2p+1/q} + N^{2/q}|t-r|^{2/q} \right).
	\end{equation*}
	Thus, if $p \geq 1/2$ and $1 < q < 2$, we conclude that
	\begin{equation*}
		\mathbb{E}\left[ |Y^{N}(s) - Y^{N}(r)|^{2p} \, |Y^{N}(t) - Y^{N}(s)|^{2p} \right]
		\;\lesssim\; |t-r|^{4p} + |t-r|^{2p+1/q} + |t-r|^{2/q},
	\end{equation*}
	which verifies the second condition.
	The third condition follows from~\eqref{eq:Ydif} and from an application of Hölder's inequality together with Markov's inequality.
	
	We now show that these three conditions imply tightness. By \cite[Theorems 13.2 and 13.3]{Bill}, assuming our first and third conditions, it suffices to prove that for every $\varepsilon, \eta > 0$, there exists $\delta > 0$ such that
	\begin{align*}
		\mathbb{P}\left( \sup_{\substack{r \leq s \leq t \\ t-r \leq \delta}}
		\left\{ |W^{N}(s)-W^{N}(r)| \wedge |W^{N}(t)-W^{N}(s)| \right\}
		\geq \varepsilon \right) \;\leq\; \eta\,.
	\end{align*}
	Let $v(\delta) = \lfloor 1/\delta \rfloor$, set $t_{v(\delta)} = T$, and define $t_j = j\delta T$ for $0 \leq j < v(\delta)$. If $t-r \leq \delta$, then $r$ and $t$ either lie in the same interval $[t_{j-1}, t_j]$ or in two adjacent intervals. In both cases, they belong to some interval $[t_{j-1}, t_{j+1}]$ with $1 \leq j \leq v(\delta)-1$. Therefore,
	\begin{align*}
		\mathbb{P}\left( \sup_{\substack{r \leq s \leq t \\ t-r \leq \delta}}
		\left\{ |W^{N}(s)-W^{N}(r)| \wedge |W^{N}(t)-W^{N}(s)| \right\}
		\geq \varepsilon \right)
		\;\leq\; \sum_{j=1}^{v(\delta)-1} \mathbb{P}(\ell_j \geq \varepsilon)\,,
	\end{align*}
	where
	\begin{equation*}
		\ell_j = \sup_{\substack{r \leq s \leq t \\ r,s,t \in [t_{j-1}, t_{j+1}]}}
		 \big|W^{N}(s)-W^{N}(r)\big| \wedge \big|W^{N}(t)-W^{N}(s)\big| \,.
	\end{equation*}
	By Markov's inequality and the second condition, for all $r < s < t$ and $\varepsilon > 0$,
	\begin{align*}
		&\mathbb{P}\left( |W^{N}(s)-W^{N}(r)| \wedge |W^{N}(t)-W^{N}(s)| \geq \varepsilon \right) \\
		&\leq\; \varepsilon^{-4p}
		\mathbb{E}\Big[ \left( |W^{N}(s)-W^{N}(r)| \wedge |W^{N}(t)-W^{N}(s)| \right)^{4p} \Big] \\
		&\leq\; \varepsilon^{-4p}
		\mathbb{E}\Big[ |W^{N}(s)-W^{N}(r)|^{2p} \, |W^{N}(t)-W^{N}(s)|^{2p} \Big] \\
		&\leq\; \varepsilon^{-4p} (g(t)-g(r))^{2\alpha}\,.
	\end{align*}
	By \cite[Theorem 10.3]{Bill}, there exists a constant $K_{p,\alpha} > 0$ such that, for all $1 \leq j \leq v(\delta)-1$,
	\begin{equation*}
		\mathbb{P}(\ell_j \geq \varepsilon)
		\;\leq\; \varepsilon^{-4p} K_{p,\alpha} (g(t_{j+1}) - g(t_{j-1}))^{2\alpha}\,.
	\end{equation*}
	Summing over $j$, we obtain
	\begin{align*}
		&\mathbb{P}\left( \sup_{\substack{r \leq s \leq t \\ t-r \leq \delta}}
		\left\{ |W^{N}(s)-W^{N}(r)| \wedge |W^{N}(t)-W^{N}(s)| \right\}
		\geq \varepsilon \right) \\
		&\leq\; \varepsilon^{-4p} K_{p,\alpha}
		\sum_{j=1}^{v(\delta)-1} (g(t_{j+1}) - g(t_{j-1}))^{2\alpha} \\
		&\leq\; \varepsilon^{-4p} K_{p,\alpha}
		\sum_{j=1}^{v(\delta)-1} (g(t_{j+1}) - g(t_{j-1}))
		\max_{1 \leq j \leq v(\delta)-1}
		(g(t_{j+1}) - g(t_{j-1}))^{2\alpha-1} \\
		&\leq\; 2 K_{p,\alpha} (g(T) - g(0)) \varepsilon^{-4p}
		\left( \omega_g(2\delta) \right)^{2\alpha - 1}\,,
	\end{align*}
	where $\omega_g$ denotes the modulus of continuity of $g$. Since $g$ is uniformly continuous and $\alpha > 1/2$, we can choose $\delta$ sufficiently small so that
	\begin{equation*}
		2 K_{p,\alpha} (g(T) - g(0)) \varepsilon^{-4p}
		\left( \omega_g(2\delta) \right)^{2\alpha - 1}
		\;\leq\; \eta\,.
	\end{equation*}
	This completes the proof.
\end{proof}

\section{Computational Simulations}\label{s4}
We present next some Python simulations of the delayed random walk  defined in Subsection~\ref{s2}. It is known that the Delayed Logistic ifferential equation of the form \eqref{eq:Delayed}
 exhibits a Hopf bifurcation when $\tau R = \pi/2$,  see \cite{Hopf_bifurcation}. In our model $R=1/2$, hence for delays $\tau< \pi$ any solution converges to the constant solution $u\equiv 1$, while for $\tau> \pi$, a periodic solution becomes attractive. This is observed in our computational simulations, (the code is available at \cite{Delayed_RW}). Note that  these simulations are done in a very large state-space, namely $\Omega_N = \bb R_{\geq 0}^{\lfloor \tau N\rfloor +1}$, where $N=50,\!000$ or $N=100,\!000$. 
The curves obtained fit reasonably well the graphs of the respective  solutions of the Hutchinson delayed differential equation according to the chosen parameters, see \cite[Chapters 5 and 6]{Hal} for instance. 
\newcommand{\constante}{0.6}
\begin{figure}[H]
	\centering
	\includegraphics[width=\constante\linewidth]{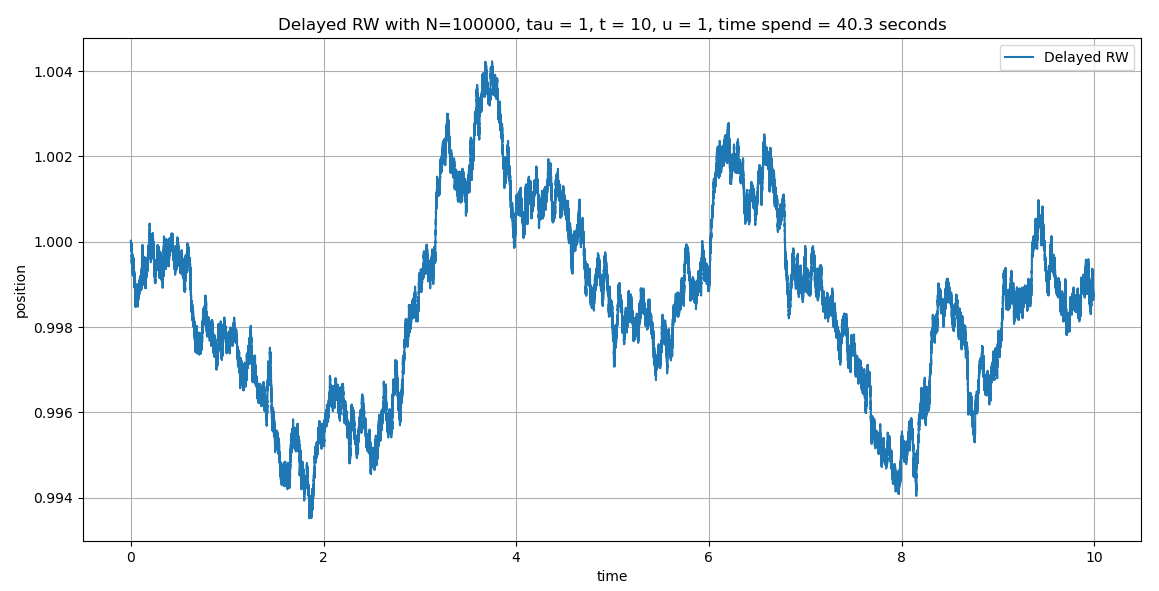}
	\caption{This simulation starts from the invariant profile $u=1$. 	Therefore, one expects that output of the simulation of the Markov chain to be approximately constant. This precisely what we see in the picture noting the drawing's scale: the oscillation in the $y$-axis is less than $0.05$ around the invariant value $1$, hence less than $0.5\%$ with respect to the invariant value $u=1$.}
	\label{fig:figure3}
\end{figure}

\begin{figure}[H]
	\centering
	\includegraphics[width=\constante\linewidth]{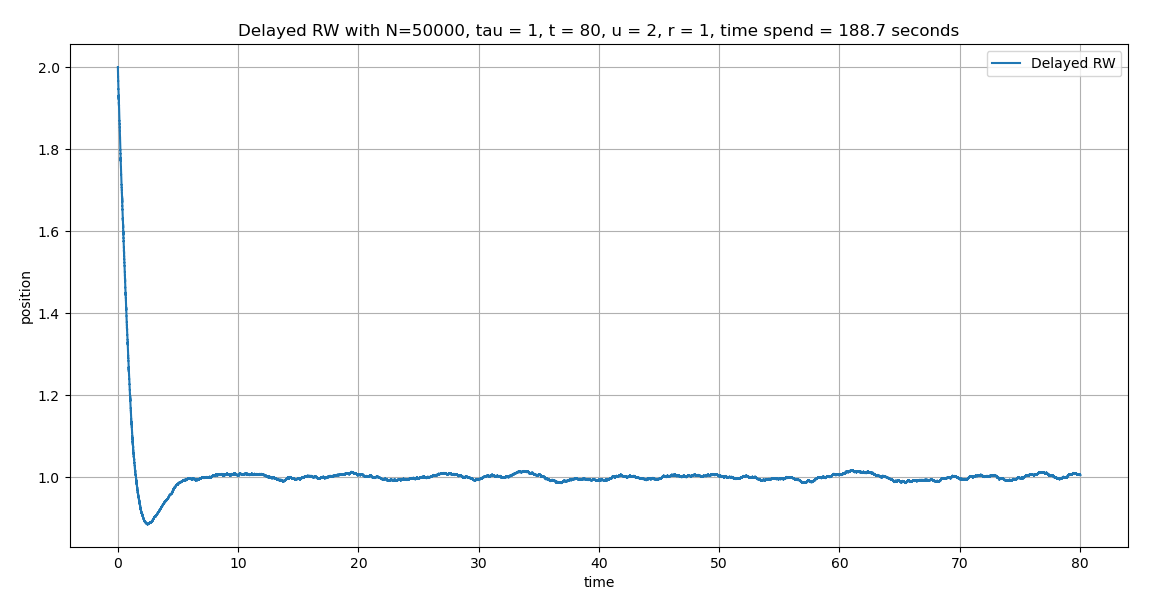}
	\caption{This simulation has delay $\tau = 1$ and starts at $u=2$. Since $\tau < \pi$, which is the critical value of the Hopf's bifurcation of the delayed Logistic Equation, the invariant profile $u\equiv 1$ is attractive. Moreover, contrarily to the classical logistic differential equation, which does not go below one once starting from $u>1$, the delayed logistic does, as we can see in the picture.}
	\label{fig:figure1}
\end{figure}

\begin{figure}[H]
	\centering
	\includegraphics[width=\constante\linewidth]{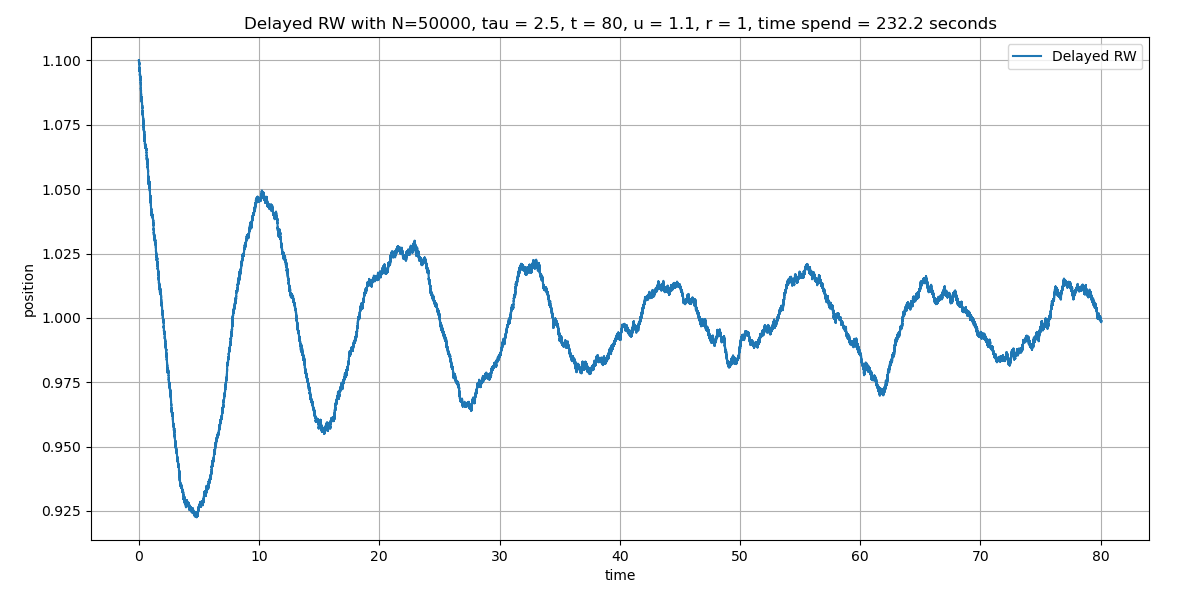}
	\caption{This simulation has delay $\tau = 2.5<\pi$, so it lies in the subcritical regime and starts at $u=1.1$. The proximity of the starting point with the invariant profile (constant equal to one) creates some noise in the simulation.}
	\label{fig:figure10a}
\end{figure}

\begin{figure}[H]
	\centering
	\includegraphics[width=\constante\linewidth]{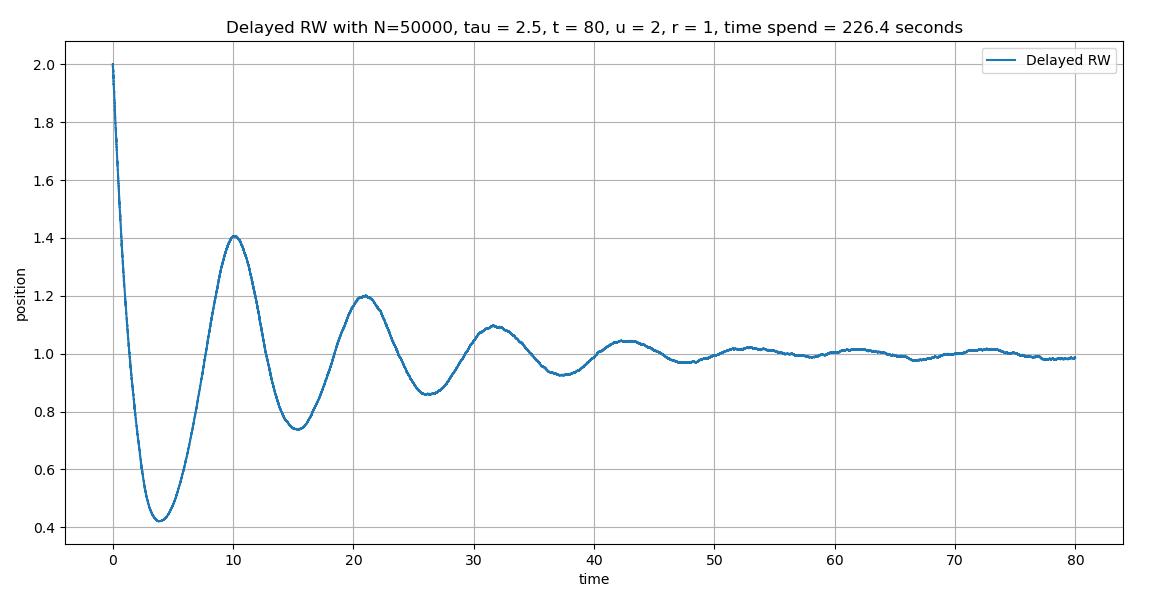}
	\caption{Another picture in the subcritical regime with delay $\tau = 2.5< \pi$, starting at $u=2$. The simulation is smoother in comparison with the previous picture. We believe is due to a bigger distance of the starting point to the invariant profile equal to one.}
	\label{fig:figure9a}
\end{figure}

\begin{figure}[H]
	\centering
	\includegraphics[width=\constante\linewidth]{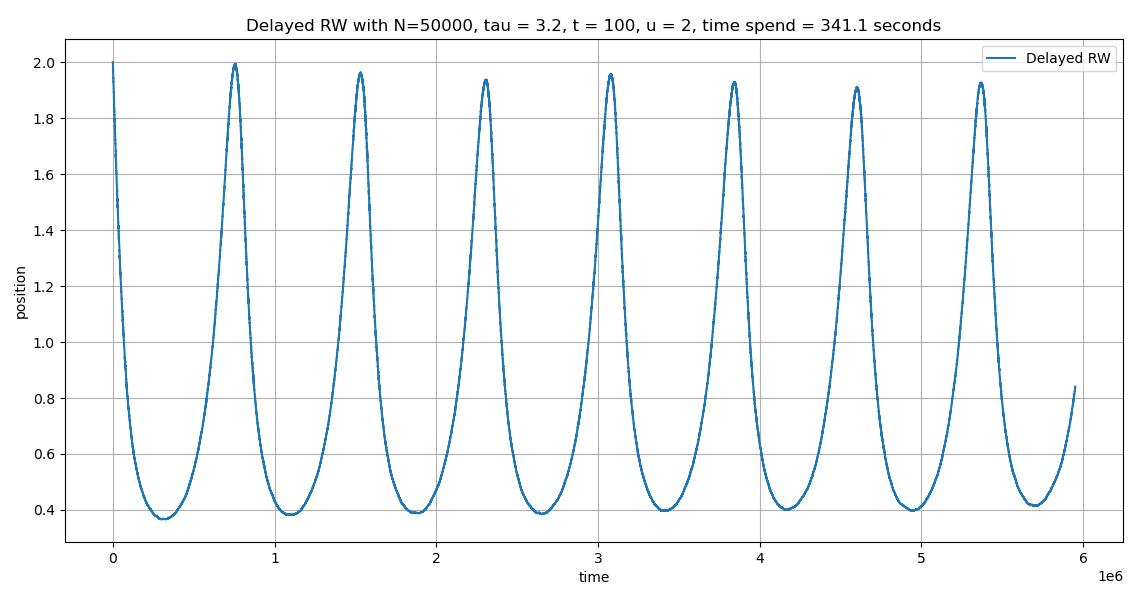}
	\caption{Here $\tau = 3.2> \pi$, starting at $u=2$. In this supercritical regime, a periodic orbit becomes attractive.}
	\label{fig:figure15}
\end{figure}

\section*{Acknowledgements}
D.E.\ was supported by the National Council for Scientific and Technological Development - CNPq via a Bolsa de Produtividade  303348/2022-4. D.E.\ moreover acknowledge support by the Serrapilheira Institute (Grant Number Serra-R-2011-37582). D.E, T.F.\ and M.J.\  acknowledge support by the National Council for Scientific and Technological Development - CNPq via a Universal Grant (Grant Number 406001/2021-9). D.E. and
T.F.\ acknowledge support by the CNPq via a Universal Grant (Grant Number 401314/2025-1). T.F.\ was supported by the National Council for Scientific and Technological Development - CNPq via a Bolsa de Produtividade number 306554/2024-0. E.B.\ thanks CAPES.

\bibliography{bibliografia}
\bibliographystyle{plain}
\end{document}